\newcommand{\R}{\mathbb{R}}
\newcommand{\Z}{\mathbb{Z}}
\newcommand{\E}{\mathbb{E}}
\newcommand{\Prob}{\mathbb{P}}
\newcommand{\conv}{\mathrm{conv}}
\newcommand{\supp}{\mathrm{supp}}
\newcommand{\aff}{\mathrm{aff}}
\DeclareMathOperator{\vol}{vol}
\newcommand{\voln}{\vol_n}
\newcommand{\pyr}{\mathrm{pyr}}
\newcommand{\centroid}{\mathrm c}
\newcommand{\enorm}[1]{\lVert #1\rVert} %Euclidean norm 
\newcommand{\ip}[2]{\langle #1,#2\rangle} %inner product
\newcommand{\Pn}{{\mathcal P}^n} %polytopes
\newcommand{\wonull}{\setminus\{0\}} %without 0
\newcommand{\inte}{\mathrm{int}\,}  %interior of a set
\newcommand{\V}{\mathrm{V}}
\newcommand{\sph}{\mathbb{S}}
\newcommand{\dual}[1]{{#1}^\star}
\DeclareMathOperator{\lin}{lin}
\title[Affine subspace concentration conditions]{Affine
  subspace concentration conditions for centered polytopes}
\author{Ansgar Freyer, Martin Henk and Christian Kipp}
\address{Technische Universit\"at Berlin, Institut f\"ur Mathematik, Sekr. MA4-1, Stra{\ss}e des 17 Juni 136, D-10623 Berlin}
\address{}
\email{\{freyer, henk, kipp\}@math.tu-berlin.de}
\date{}
\thanks{}
\numberwithin{equation}{section}
\begin{document}

\theoremstyle{plain}
\newtheorem{theorem}{Theorem}[section]
\newtheorem{lemma}[theorem]{Lemma}
\newtheorem{corollary}[theorem]{Corollary}
\newtheorem{conjecture}{Conjecture}
\newtheorem{proposition}[theorem]{Proposition}
\newtheorem*{question}{Question}
\newtheorem{thmx}{Theorem}
\renewcommand{\thethmx}{\Alph{thmx}}
\newtheorem{lemmax}[thmx]{Lemma}

\newtheorem{theoman}{Theorem}[section]

\renewcommand{\thetheoman}{\Roman{theoman}}

\theoremstyle{definition}
\newtheorem*{definition}{Definition}

\newtheorem{remark}[theorem]{Remark}
\newtheorem{claim}{Claim}
\newtheorem*{remark*}{Remark}
\newtheorem{example}[theorem]{Example}

\begin{abstract}
  Recently, K.-Y. Wu introduced affine subspace concentration
  conditions for the cone volumes of polytopes and proved that the
  cone volumes of centered, reflexive, smooth
        lattice polytopes satisfy these conditions.  
 We extend the result to arbitrary centered polytopes.
\end{abstract}

\maketitle

\section{Introduction and Results}
\label{sec:intro}
Let $\R^n$ be the $n$-dimensional Euclidean space equipped with the
standard inner product $\ip{\cdot}{\cdot}$ and the Euclidean
norm $\enorm{x}=\sqrt{\ip{x}{x}}$, $x\in\R^n$. Let $\Pn_o$ be the
family of all $n$-dimensional polytopes $P\subset\R^n$ containing the
origin in its interior, i.e., $0\in\inte P$. 
%Here we are interested in so
% called centered polytopes, which are  
% $n$-dimensional polytopes  $P\subset\R^n$  having their centroid $\centroid(P)$ at the
% origin, i.e., 
% \begin{equation*}
% 0=\centroid(P)=\frac{1}{\vol(P)}\int_P x\,\mathrm{d} x,
% \end{equation*}
% where $\vol(\cdot)$ denotes the $n$-dimensional Lebesgue measure, and
% the integration above is carried out with respect to that measure.
% The class of all centered polytopes is denoted by $\Pn_o$.
Given such a polytope $P\in \Pn_o$, it admits a unique representation
as
\begin{equation*}
P= \{x\in\R^n: \ip{a_i}{x}\leq 1,\,1\leq i \leq m\},
\end{equation*}   
where the vectors $a_i\in\R^n\wonull$ are pairwise different and $F_i=P\cap\{x\in\R^n
: \ip{a_i}{x}=1\}$, $1\leq i\leq m$, are the facets of $P$. Then the volume of $P$ (i.e., the
$n$-dimensional Lebesgue measure of $P$) can be
written as 
\begin{equation*}
\vol(P)= \frac{1}{n} \sum_{i=1}^m \vol_{n-1}(F_i)\frac{1}{\enorm{a_i}},
\end{equation*}
where, in general, for a $k$-dimensional set $S\subseteq\R^n$, $\vol_k(S)$ denotes
the $k$-dimensional Lebesgue measure with respect to the space $\aff S$, the
affine hull of $S$.   This identity is also known as the pyramid formula,
as it sums up the volumes of the pyramids (cones)
\begin{equation*}
C_i = \conv(\{0\}\cup F_i), 
\end{equation*}
where $\conv S$ denotes the convex hull of the set $S$.     
Observe that     
\begin{equation*}
  \vol(C_i)=\frac{1}{n}\frac{1}{\enorm{a_i}}\vol_{n-1}(F_i),\, 1\leq
  i\leq m.
\end{equation*}
  These cone volumes  are the geometric base of
the  cone-volume measure of an arbitrary convex body, which is  a finite positive Borel measure  on the
$(n-1)$-dimensional unit sphere $\sph^{n-1}\subset\R^n$. The cone-volume measure is the subject of  the
well-known and important 
log-Minkowski problem in modern Convex Geometry, see, e.g.,
\cite{ BoeroeczkyHegedues2015,
  BoeroeczkyHegedusZhu2016, bh16, BoeroeczkyHenkPollehn2018,
  blyz13, BoeroeczkyLutwakYangEtAl2019, clz19, hl14,
  HuangLutwakYangEtAl2016, HugLutwakYangEtAl2005}. 

In the discrete
setting, i.e., the polytopal case, the cone-volume measure $\V_P(\cdot)$
associated to $P$ is the discrete measure
\begin{equation*}
\V_P(\eta)= \sum_{i=1}^m \vol(C_i)\,\delta_{u_i}(\eta),   
\end{equation*}   
where $\eta\subseteq \sph^{n-1}$ is a Borel set, and $\delta_{u_i}(\cdot)$ denotes the delta
measure concentrated on $u_i$. In analogy to the classical
Minkowski problem, the discrete log-Minkowski problem asks for
sufficient and necessary conditions such that a discrete Borel measure
$\mu=\sum_{i=1}^m \gamma_i\,\delta_{u_i}(\cdot)$, $\gamma_i\in\R_{>0}$,
$u_i\in\sph^{n-1}$, is the cone-volume measure of a polytope.

B\"or\"oczky, Lutwak, Yang and Zhang settled the general (i.e., not
necessarily discrete) log-Minkowski problem for arbitrary
finite even Borel measures. Here \textit{even} means that $\mu(A)=\mu(-A)$ holds for all Borel sets $A \subseteq \sph^{n-1}$. This assumption corresponds to the case of origin-symmetric
convex bodies; reduced to the discrete setting their result
may be stated as follows: 
\begin{theoman}[B\"or\"oczky-Lutwak-Yang-Zhang, \cite{blyz13}] A discrete even Borel measure
	$\mu:\sph^{n-1}\to\R_{\geq 0}$ given by $\mu=\sum_{i=1}^m
	\gamma_i\,\delta_{u_i}$, $\gamma_i\in\R_{>0}$,
	$u_i\in\sph^{n-1}$, %$\{a_1,\dots,a_m\}=\{-a_1,\dots,-a_m\}$,
	is the cone-volume measure of an origin-symmetric polytope
	$P\in \Pn_o$ if and
	only if the {\em subspace
		concentration condition} is fulfilled, i.e., i) for every  linear
	subspace $L\subseteq\R^n$ it holds 
	\begin{equation}
	\mu(L\cap \sph^{n-1}) =\sum_{i:\,u_i\in L}\gamma_i \leq \frac{\dim
		L}{n} \sum_{i=1}^m \gamma_i = \frac{\dim
		L}{n}\mu(\sph^{n-1}).
	\label{eq:scc}         
      \end{equation}
      and ii),   
	% If $\mu$ is in fact the cone-volume measure of an origin-symmetric polytope $P$, then
        equality holds in \eqref{eq:scc} for a subspace $L$ if and
	only if there exists a complementary  subspace $L'$ such that $\mu$ is
	concentrated on $L\cup L'$.         
\end{theoman}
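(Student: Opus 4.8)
The statement is an ``if and only if,'' so the plan is to treat the necessity of the subspace concentration condition and its sufficiency separately, working throughout in the class of origin-symmetric polytopes so that the facet normals $u_i$ occur in antipodal pairs $\pm u_i$ and every auxiliary body we build is automatically centrally symmetric.

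For the \emph{necessity} I would fix a linear subspace $L$ with $\dim L=d$ and read off \eqref{eq:scc} from a first-variation identity. Applying the divergence theorem to the linear vector field $x\mapsto P_Lx$ on $P$, where $P_L$ is the orthogonal projection onto $L$ (so that $\operatorname{div}(P_Lx)=d$), gives
\begin{equation*}
d\,\vol(P)=\sum_{i=1}^m\frac{\vol_{n-1}(F_i)}{\enorm{a_i}}\,\langle P_La_i,\bar x_i\rangle ,
\end{equation*}
where $\bar x_i$ is the centroid of $F_i$. For $u_i\in L$ one has $P_La_i=a_i$ and $\langle a_i,\bar x_i\rangle=1$, so the corresponding summand equals $n\,\vol(C_i)$; hence
\begin{equation*}
n\sum_{u_i\in L}\vol(C_i)=d\,\vol(P)-R,\qquad R=\sum_{u_i\notin L}\frac{\vol_{n-1}(F_i)}{\enorm{a_i}}\,\langle P_La_i,\bar x_i\rangle .
\end{equation*}
The entire content of the inequality \eqref{eq:scc} is thus the assertion $R\ge 0$, and this is exactly where central symmetry must enter: for non-symmetric $P$ the inequality fails, since a thin simplex with the origin pushed close to one facet concentrates almost all cone volume on a single normal. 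I would therefore derive $R\ge 0$ from $P=-P$ together with a convexity estimate, and track the equality case $R=0$, which forces every facet with $u_i\notin L$ to be ``flat over $L$'' and hence $P$ to split as a direct sum of a body in $L$ and a body in a complementary subspace $L'$; the converse implication of ii) follows from the product structure of volume for such a direct sum.

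For the \emph{sufficiency} I would set up the associated variational problem. Fixing the directions $u_1,\dots,u_m$, every choice of support numbers $\xi\in\R^m_{>0}$ determines the origin-symmetric Wulff shape $P(\xi)=\{x:\langle u_i,x\rangle\le\xi_i,\ 1\le i\le m\}$, and one minimizes the logarithmic functional $\Phi(\xi)=\sum_{i=1}^m\gamma_i\log\xi_i$ subject to $\vol(P(\xi))=1$. The first-variation formula $\delta\vol=\int\delta h\,dS$, rewritten as $\delta\vol=n\int(\delta h/h)\,dV$, shows that a constrained critical point satisfies $\gamma_i=\lambda\, n\,\vol(C_i)$ for all $i$; after rescaling the body to absorb $\lambda$ this says precisely that $\mu$ is the cone-volume measure of $P(\xi)$. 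The real difficulty is the \emph{existence} of a minimizer, since the admissible set is not compact: a minimizing sequence can degenerate by stretching in some directions while collapsing toward a subspace $L$, keeping $\vol=1$ but driving $\Phi\to-\infty$ unless $\mu$ places little enough mass on $L$. Quantifying this degeneration and showing that the \emph{strict} inequality in \eqref{eq:scc} bounds the minimizing sequence away from the boundary of the admissible set — so that a genuine $n$-dimensional minimizer exists — is the step I expect to be the main obstacle. The boundary case, where equality holds in \eqref{eq:scc}, is then handled through condition ii): the direct-sum structure lets one induct on the dimension, solving the problem on $L$ and on a complementary $L'$ separately and taking the direct sum of the two solutions.
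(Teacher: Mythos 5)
This statement is Theorem I in the paper, quoted from \cite{blyz13} as background for the introduction; the paper itself contains \emph{no} proof of it, so there is no internal argument to compare your proposal against. It must therefore stand on its own as a proof of the full theorem of B\"or\"oczky--Lutwak--Yang--Zhang, whose original proof occupies a substantial part of a JAMS article --- and it does not stand on its own: both halves of your plan stop exactly at the point where the real work begins. For the \emph{necessity}, your divergence-theorem identity is correct (for $V(x)=P_Lx$ one indeed gets $d\,\vol(P)=\sum_i \enorm{a_i}^{-1}\vol_{n-1}(F_i)\langle P_La_i,\bar x_i\rangle$, and the terms with $a_i\in L$ equal $n\vol(C_i)$), but this is only a reformulation: the claim $R\geq 0$ \emph{is} the subspace concentration inequality, and ``derive $R\geq 0$ from $P=-P$ together with a convexity estimate'' names no estimate. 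Note that central symmetry alone buys you nothing here: the facets pair up as $(F_i,-F_i)$ with centroids $(\bar x_i,-\bar x_i)$ and normals $(a_i,-a_i)$, so the two paired summands of $R$ are \emph{equal}, not opposite --- there is no cancellation or positivity for free, and the actual proof of this inequality (in \cite{blyz13}, and in \cite{hl14},\ \cite{bh16} for the centered case) is a genuinely involved argument.

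For the \emph{sufficiency}, minimizing $\Phi(\xi)=\sum_i\gamma_i\log\xi_i$ over origin-symmetric Wulff shapes of volume one is indeed the framework used by B\"or\"oczky--Lutwak--Yang--Zhang, and your first-order condition $\gamma_i=\lambda\, n\vol(C_i)$ is the right computation; but you explicitly defer the existence of a minimizer --- the compactness argument showing that the \emph{strict} inequality in \eqref{eq:scc} forces a minimizing sequence to stay away from degenerate (lower-dimensional or unbounded) limits. That is the analytic heart of the theorem, not a routine verification, and it also interacts with a subtlety you do not mention: at a putative minimizer some constraints $\langle u_i,x\rangle\leq\xi_i$ may be inactive (the Wulff shape loses facets), in which case the Lagrange condition does not directly produce a polytope whose cone-volume measure charges every $u_i$. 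Finally, the equality case ii) is only gestured at: the ``only if'' direction (equality forces concentration on $L\cup L'$) requires its own argument, and the inductive ``direct sum'' construction must be checked to reproduce the prescribed measure. In short, your proposal is a correct identification of the standard strategy and of where its difficulties lie, but the two central steps --- the inequality $R\geq 0$ and the existence of the minimizer --- are asserted rather than proved, so there is a genuine gap.
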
   

In the non-even case, even in the discrete setting, a complete
characterization is still missing, see \cite{clz19} for the state of
the art.  The main problem here is to find the right  
position of the origin.

% For a brief history of this problem we refer to
% the end of this section.  

% For a polytope $P=\{x\in\R^n: \langle a_i,x\rangle\leq 1,\,1\leq i \leq m\}$ that contains the origin, let \[ C_i = \conv\big( \{x\in P : \langle x,a_i=1\}\cup\{0\}\big) \subseteq P,\] where $\conv(\cdot)$ denotes the convex hull operator, be the pyramid over the facet corresponding to $u_i$ with the origin as its apex. If $u_i$ does not define a facet of $P
% $, i.e., it is redundant in the description of $P$, $C_i$ might be low-dimensional or empty. 

% In this notation, the polytope $P$ is said to satisfy the \emph{subspace concentration condition} with respect to the linear $d$-subspace $L\subseteq\R^n$, if 
% \begin{equation}
%     \label{eq:lin_scc}
%     \sum_{i:a_i\in L} \vol(C_i) \leq \frac{d}{n} \vol(P).
% \end{equation}
% As we are only concerned with the volumes of the $C_i$'s, we shall assume throughout the paper that the inequality descriptions of the polytopes we consider are irredundant. \ansgar{Unless otherwise stated? Maybe write this in the preliminaries but not here already...}

% Subspace concentration conditions such as \eqref{eq:lin_scc} stem from the $L_p$-Brunn-Minkowski-Theory where they serve as necessary and, in many scenarios also sufficient, criteria for a finite Borel measure on the sphere to be the cone volume measure of a convex polytope (or, more generally, a convex body). For the definition of these measures, their characterization and further details, we refer to [REFERENCES]\ansgar{todo}.

A polytope $P\in\Pn_o$ is called \emph{centered} if its centroid $\centroid(P)$ is
at the origin, i.e., 
\begin{equation*}
\centroid(P) = \vol(P)^{-1}\int_P x\,\mathrm d x=0.
\end{equation*} 
It is known that centered polytopes satisfy the subspace
concentration condition. 
\begin{theoman}[Henk-Linke, {\cite{hl14}}]
	\label{thm:henklinke}
	Let $P=\{x\in\R^n: \langle a_i,x\rangle\leq 1,\,1\leq i \leq
        m\}$ be a
	centered polytope and let $L\subseteq\R^n$ be a  linear
	subspace. Then, \eqref{eq:scc} holds true, i.e.,
	\begin{equation*}
	\sum_{i:\,a_i\in L}\vol(C_i) \leq \frac{\dim L}{n}\vol( P).
	\end{equation*} 
	Equality is obtained if and only if there exists a complementary
	linear subspace $L'\subseteq\R^n$ to $L$ such that $\{a_i:1\leq i
	\leq m\}\subseteq L \cup L'$.
\end{theoman}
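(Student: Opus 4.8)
The plan is to reduce the inequality to a weighted estimate on the shadow $Q=\pi_L(P)$ of $P$ under the orthogonal projection $\pi_L$ onto $L$, and to prove that estimate using the Brunn--Minkowski concavity of the slice-volume function together with the centroid condition. Since all quantities involved are rotation invariant, I would first rotate so that $L=\spann(e_1,\dots,e_k)$, $k=\dim L$, and write $x=(y,z)$ with $y\in L\cong\R^k$ and $z\in L^\perp\cong\R^{n-k}$. Set $m=n-k$, let $Q=\pi_L(P)$, and for $y\in Q$ let $P_y=\{z:(y,z)\in P\}$ be the fibre and $g(y)=\vol_m(P_y)$. A facet $F_i$ of $P$ has $a_i\in L$ precisely when its supporting hyperplane contains the direction space $L^\perp$, i.e.\ when $F_i$ is ``vertical''; such facets are in bijection with the facets $G_i$ of $Q$, carrying the same (now $k$-dimensional) normals $a_i$, and $F_i$ is the cylinder $\{(y,z):y\in G_i,\ z\in P_y\}$. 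Because $L\perp L^\perp$, Fubini gives $\vol_{n-1}(F_i)=\int_{G_i}g\,d\sigma$, where $\sigma$ is $(k-1)$-dimensional measure on $G_i$. Combined with $\vol(C_i)=\tfrac1n\enorm{a_i}^{-1}\vol_{n-1}(F_i)$ this yields the exact identity
\[ \sum_{i:\,a_i\in L}\vol(C_i)=\frac1n\sum_{i}\frac{1}{\enorm{a_i}}\int_{G_i}g\,d\sigma=\frac1n\int_{\partial Q}g\,\ip{y}{\nu}\,d\sigma, \]
the last step because $\ip{y}{\nu}=\enorm{a_i}^{-1}$ on the facet $G_i$. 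As $\vol(P)=\int_Q g\,dy$, the claimed bound is equivalent to the shadow inequality $\int_{\partial Q}g\,\ip{y}{\nu}\,d\sigma\le k\int_Q g\,dy$.

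Two structural facts feed into this. First, convexity of $P$ gives $P_{\lambda y+(1-\lambda)y'}\supseteq\lambda P_y+(1-\lambda)P_{y'}$, so by Brunn--Minkowski the function $G:=g^{1/m}$ is concave and nonnegative on $Q$ (for $m=0$, i.e.\ $L=\R^n$, the statement is trivial). Second, $\centroid(P)=0$ projects to $\int_Q y\,g(y)\,dy=\pi_L\!\big(\int_P x\,dx\big)=0$; that is, the barycentre of $Q$ with respect to the measure $g\,dy$ is the origin. Applying the divergence theorem to the field $y\mapsto g(y)\,y$ on $Q$, the shadow inequality is in turn equivalent to $\int_Q \ip{\nabla g}{y}\,dy\le 0$.

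The heart of the matter is this last inequality, and here the centroid enters non-trivially. Writing $g=G^m$ and using that $G$ is differentiable a.e.\ with the concavity (super-gradient) inequality $\ip{\nabla G(y)}{y-w}\le G(y)-G(w)$ for all $w\in Q$, I would average the variable $w$ against the probability measure $d\lambda=\big(\int_Q G^m\big)^{-1}G^m\,dy$. Its barycentre is $\int_Q w\,d\lambda=\big(\int_Q G^m\big)^{-1}\int_Q w\,g\,dw=0$ by the centroid condition, so the linear term drops out and leaves the pointwise estimate $\ip{\nabla G(y)}{y}\le G(y)-M$ with $M=\big(\int_Q G^{m}\big)^{-1}\int_Q G^{m+1}$. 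Multiplying by $G^{m-1}\ge0$, integrating over $y$, and recalling $\int_Q\ip{\nabla g}{y}\,dy=m\int_Q\ip{\nabla G}{y}G^{m-1}\,dy$, I obtain $\int_Q\ip{\nabla g}{y}\,dy\le m\big(\int_Q G^m-M\int_Q G^{m-1}\big)$. Finally Cauchy--Schwarz, $\big(\int_Q G^m\big)^2\le\int_Q G^{m+1}\int_Q G^{m-1}$, gives $M\int_Q G^{m-1}\ge\int_Q G^m$, so the right-hand side is $\le0$, as required. The main obstacle is precisely finding this combination: linear vector fields only reproduce the pyramid identity tautologically, and one must use concavity and the barycentre together---averaging the super-gradient inequality against $g\,dy$ to annihilate the first moment, then closing with Cauchy--Schwarz.

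For the equality case, equality forces equality in Cauchy--Schwarz, hence $G$---and so $g$---is constant on $Q$; by the equality case of Brunn--Minkowski the fibres $P_y$ are then pairwise homothetic of equal volume, i.e.\ translates $P_y=R+\phi(y)$, and convexity forces $\phi$ to be affine, indeed linear since $0\in\inte Q$. Thus $P$ is a linear (shear) image of the product $Q\times R$, and tracing the facet normals through this shear exhibits a subspace $L'$ complementary to $L$ with $\{a_i\}\subseteq L\cup L'$; the converse, that such a product-up-to-shear structure makes $g$ constant and hence yields equality, is immediate. I expect the bookkeeping of the vertical-facet/shadow correspondence and this equality discussion to be routine once the shadow inequality is established.
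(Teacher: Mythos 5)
This statement (Theorem II) is not proved in the paper at all: it is quoted from Henk--Linke \cite{hl14} and used as a black box in the proof of Theorem \ref{thm:affine_scc}, so your proposal can only be measured against the original argument, not against anything in this text. Your framework --- project onto $L$, encode $P$ by the fibre-volume function $g(y)=\vol_{n-k}(P_y)$ on $Q=\pi_L(P)$, invoke Brunn--Minkowski to make $G=g^{1/(n-k)}$ concave, and feed in centredness through $\int_Q y\,g(y)\,dy=0$ --- is also the skeleton of the known proofs (\cite{hl14}, and \cite{bh16} for general centered bodies). What is distinctive is your closing mechanism, and it checks out: averaging the supergradient inequality of the concave function $G$ against the probability measure $g\,dy/\int_Q g$ annihilates the first moment by centredness, giving the pointwise bound $\ip{\nabla G(y)}{y}\le G(y)-M$, which integrates against $G^{m-1}$ (writing $m=n-k$) and closes via Cauchy--Schwarz, $(\int_Q G^m)^2\le\int_Q G^{m+1}\int_Q G^{m-1}$. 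The equality analysis is also sound: equality forces equality in Cauchy--Schwarz, hence $G$ constant, hence by the Brunn--Minkowski equality case all fibres are translates, so $P$ is a sheared cylinder over $Q$ and the normals land in $L\cup L'$ for a complementary $L'$; for the converse you could avoid the fibre computation entirely by the trick used in Section \ref{sec:pfs} of this paper (apply the inequality to both $L$ and $L'$ and add).

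There is, however, one step that is false as stated and must be repaired: the facets $F_i$ of $P$ with $a_i\in L$ are \emph{not} in bijection with the facets of $Q$; they inject into them, but $Q$ can have extra facets. Take the centered polytope $P=\conv\{\pm e_1,\pm e_2,\pm e_3\}\subset\R^3$ and $L=\spann\{e_1,e_2\}$: no facet normal of $P$ lies in $L$, yet $Q$ is a square with four facets. So your ``exact identity'' $\sum_{i:\,a_i\in L}\vol(C_i)=\frac1n\int_{\partial Q}g\,\ip{y}{\nu}\,d\sigma$ does not follow from the claimed correspondence. It is nevertheless true, and the patch is short: if a facet $G$ of $Q$, with scaled normal $u\in L$, is not the projection of a facet of $P$ with normal in $L$, then the face $\pi_L^{-1}(G)\cap P$ of $P$ exposed by $u$ has dimension at most $n-2$, so the fibres $P_y$ over $y\in\mathrm{relint}\,G$ have dimension strictly less than $n-k$; hence $g$ vanishes almost everywhere on $G$ and such facets contribute nothing to the boundary integral. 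With that dichotomy inserted, the identity --- and with it your whole proof --- goes through. The remaining issues are routine but should be acknowledged: the divergence theorem is applied to the non-smooth field $g(y)\,y$ (fine for polytopes, where $g$ is piecewise polynomial, or by working on $(1-\epsilon)Q$ and using radial continuity of $g$), and in the equality case the translation map $\phi$ is only affine rather than linear ($0\in\inte Q$ does not force $\phi(0)=0$), which is harmless since its constant part can be absorbed into the fibre $R$.
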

For a generalization to centered convex bodies we refer to \cite{bh16}.

\begin{figure}[hbt]
\centering
\includegraphics[width=.65\textwidth]{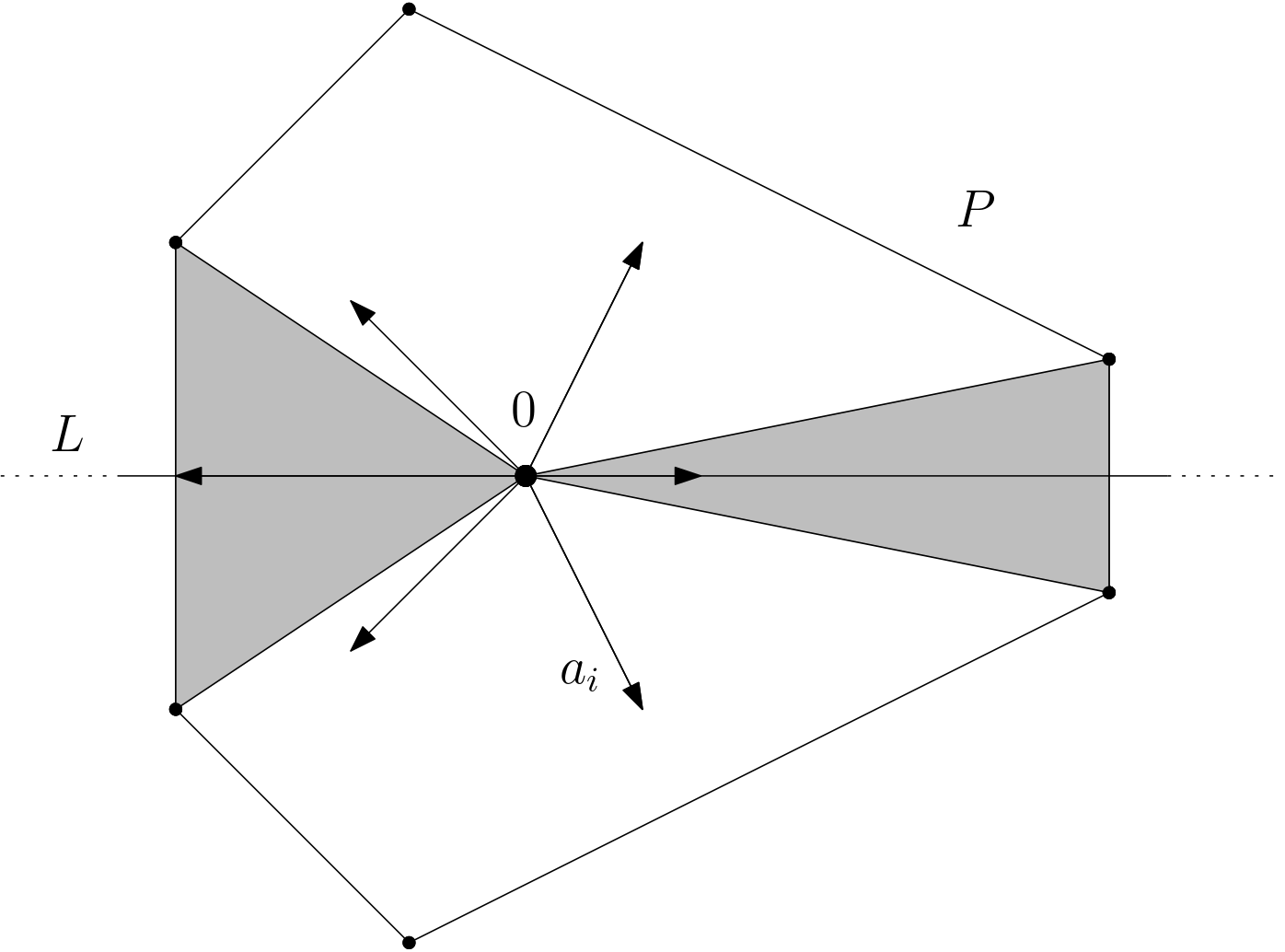}
\caption{The linear situation in Theorem \ref{thm:henklinke}.}
\label{fig:linear}
\end{figure}

Apart from the study of the log-Minkowski problem, the subspace
concentration inequalities have been recently 
reinterpreted in the context of toric geometry in \cite{hns19},
exploiting the deep connection between lattice polytopes and toric
varieties. This lead K.-Y.~Wu to prove an elegant variant to Theorem
\ref{thm:henklinke} in which the linear subspace concentration
condition \eqref{eq:scc} is replaced by an affine subspace
concentration condition. 
\begin{theoman}[K.-Y.~Wu, {\cite{wu22}}]
	\label{thm:wu}
	Let $P=\{x\in\R^n:\langle a_i,x\rangle\leq 1,\,1\leq i\leq
        m\}$ be a
	centered reflexive smooth polytope and let $A\subset\R^n$ be a proper affine subspace. Then, 
	\begin{equation*}
	\sum_{i:\, a_i\in A} \vol(C_i)\leq \frac{\dim A+1}{n+1}\vol(P).
	\end{equation*}
	Equality is obtained if and only if there exists a complementary  affine
	subspace $A'$, i.e., $A\cap A'=\emptyset$ and  $\aff(A\cup
A')=\R^n$, such that $\{a_i:1\leq i \leq m\}\subseteq A \cup A'$. 
\end{theoman}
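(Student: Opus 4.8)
The plan is to avoid toric geometry entirely and argue directly with the cone volumes, so that reflexivity and smoothness are never used; the statement then holds for every centered $P$, with Theorem~\ref{thm:wu} as a special case. I first reduce the geometry. Since cone volumes and the centroid transform equivariantly under $GL_n(\R)$, the asserted inequality is invariant under invertible linear maps (which preserve the centered hypothesis), so I may choose coordinates adapted to $A$. If $0\in A$, then $A$ is a linear subspace of dimension $k\le n-1$ and Theorem~\ref{thm:henklinke} already gives $\sum_{a_i\in A}\vol(C_i)\le\tfrac{k}{n}\vol(P)<\tfrac{k+1}{n+1}\vol(P)$; the inequality is then strict, so no extremal $A$ contains the origin. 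Hence I assume $0\notin A$ and, replacing $A$ by $\aff\{a_i:a_i\in A\}$ (which only decreases $\dim A$, preserves $0\notin A$, and leaves the sum unchanged), that the normals lying in $A$ affinely span it. After a linear change of coordinates I may take $A=\{x:x_{d+1}=1,\ x_{d+2}=\dots=x_n=0\}$ with $d=\dim A$, so that $a_i\in A$ means exactly $(a_i)_{d+1}=1$ and $(a_i)_{d+2}=\dots=(a_i)_n=0$. A consequence I would exploit is that all facet hyperplanes $\{\langle a_i,x\rangle=1\}$ with $a_i\in A$ contain the common flat $W=e_{d+1}+\lin\{e_{d+2},\dots,e_n\}$, since $\langle a_i,e_{d+1}\rangle=(a_i)_{d+1}=1$; geometrically the $A$-facets form a pyramid with apex flat $W$.

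The two identities I would build on are both linear in the cone volumes. Writing $\mu_i=\vol(C_i)/\vol(P)$, so that $\mu_i\ge 0$ and $\sum_i\mu_i=1$, the divergence theorem applied to constant vector fields gives the Minkowski-type relation $\sum_i\vol(C_i)\,a_i=\tfrac1n\int_{\bd P}\nu\,\mathrm dS=0$, i.e.\ $\sum_i\mu_i a_i=0$, valid for every $P\in\Pn_o$. Centeredness of $P$ gives the second relation $\sum_i\vol(C_i)\,\centroid(F_i)=0$, i.e.\ $\sum_i\mu_i\,\centroid(F_i)=0$, since $\centroid(C_i)=\tfrac{n}{n+1}\centroid(F_i)$ and the cones tile $P$. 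Finally each facet centroid satisfies $\langle a_i,\centroid(F_i)\rangle=1$ and $\centroid(F_i)\in P$. In these terms the claim becomes the purely linear statement $\sum_{a_i\in A}\mu_i\le\tfrac{d+1}{n+1}$ subject to the moment constraints $\sum_i\mu_i a_i=0$ and $\sum_i\mu_i\centroid(F_i)=0$, the incidences $\langle a_i,\centroid(F_i)\rangle=1$, and the convexity constraints $\langle a_k,\centroid(F_i)\rangle\le 1$ for all $i,k$.

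The heart of the argument is then a single pointwise estimate: I would seek vectors $p,q\in\R^n$, depending on $P$, such that for every facet index $i$ one has $\mathbf 1_{\{a_i\in A\}}\le\tfrac{d+1}{n+1}+\langle p,a_i\rangle+\langle q,\centroid(F_i)\rangle$. Granting this, multiplying by $\mu_i\ge 0$ and summing collapses the right-hand side to $\tfrac{d+1}{n+1}$ by the two moment identities, which is exactly the bound. For the facets with $a_i\in A$ this should be an equality in the extremal configuration, so $p,q$ are essentially forced there; for the facets with $a_i\notin A$ the inequality must be verified, and this is where I expect the real work to lie. The naive shortcut---lifting $P$ to the pyramid $\conv\big((P\times\{1\})\cup\{(0,-(n+1))\}\big)\subset\R^{n+1}$, which is centered, whose lateral cone volumes equal $\vol(C_i)$, and to which one applies Theorem~\ref{thm:henklinke} in $\R^{n+1}$---does turn the affine condition into a linear one, but it replaces $\vol(P)$ by $\tfrac{n+2}{n+1}\vol(P)$ and thus loses the sharp constant by a factor $\tfrac{n+2}{n+1}$. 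Consequently the pointwise estimate must genuinely use convexity (the inequalities $\langle a_k,\centroid(F_i)\rangle\le 1$), not merely the two moment identities; geometrically, $p,q$ should compare the cone decomposition of $P$ from the origin with the one from the apex $e_{d+1}\in W$, where every $A$-facet contributes zero, centeredness being what forces the leftover mass $\sum_{a_i\notin A}\mu_i$ to be at least $\tfrac{n-d}{n+1}$. Producing $p,q$ explicitly and checking the estimate on the non-$A$ facets is the main obstacle, and is the affine analogue of the core inequality of \cite{hl14,bh16}.

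Finally I would read off the equality case. If $\sum_{a_i\in A}\mu_i=\tfrac{d+1}{n+1}$, the pointwise estimate is tight for every $i$ with $\mu_i>0$, hence for every facet. Tightness on the facets with $a_i\notin A$ should confine their normals to a single affine subspace $A'$; together with $\aff\{a_i:a_i\in A\}=A$ this gives $\{a_i:1\le i\le m\}\subseteq A\cup A'$, while the structure of the certificate forces $\dim A'=n-1-\dim A$, $A\cap A'=\emptyset$ and $\aff(A\cup A')=\R^n$, i.e.\ $A'$ is complementary. Conversely, if $\{a_i\}\subseteq A\cup A'$ with $A'$ complementary, the two groups of cones partition $\vol(P)$ in the ratio $(\dim A+1):(\dim A'+1)$, whose terms sum to $n+1$, yielding equality.
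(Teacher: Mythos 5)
Your setup (the two moment identities $\sum_i\mu_i a_i=0$ and $\sum_i\mu_i\,\centroid(F_i)=0$, and the reduction to $0\notin A$ via Theorem \ref{thm:henklinke}) is correct, but the proof has no content where it matters: the entire theorem has been repackaged as the existence of a dual certificate $p,q\in\R^n$ with $\mathbf 1_{\{a_i\in A\}}\le\tfrac{d+1}{n+1}+\langle p,a_i\rangle+\langle q,\centroid(F_i)\rangle$ for all $i$, and you never construct $p,q$ nor verify the inequality on the non-$A$ facets --- you explicitly defer this as ``the main obstacle''. This is not a reduction to something simpler; it is the statement itself in LP-dual disguise, and there is no argument offered that such a certificate exists (it would have to depend on $P$ in a nontrivial way, precisely because, as you note, the two moment identities alone cannot imply the bound). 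The equality discussion inherits the same gap, since it reasons from tightness of the unproduced certificate. Note also that the paper itself cannot characterize equality for general centered polytopes (it says so explicitly, handling only the special cases of Theorem \ref{thm:pyramids}); your claim that smoothness and reflexivity can be dropped from the equality characterization is therefore not only unproved but goes beyond what is known.

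The ironic part is that the approach you discard as a ``naive shortcut'' is essentially the correct proof of the inequality. You are right that a single pyramid lift loses a factor $\tfrac{n+2}{n+1}$, but the fix is to iterate: setting $P^{(j)}=\pyr(P^{(j-1)})$, Lemma \ref{lemma:centered_pyramids} shows each $P^{(j)}\subset\R^{n+j}$ is again centered, the lifted cone volumes are preserved, $\vol_{n+j}(P^{(j)})=\tfrac{n+j+1}{n+1}\vol_n(P)$, and the lifted normals $\{a_i^{(j)}:a_i\in A\}$ still span a $(d+1)$-dimensional \emph{linear} subspace. Applying Theorem \ref{thm:henklinke} in $\R^{n+j}$ then gives $\sum_{a_i\in A}\vol(C_i)\le\tfrac{d+1}{n+1}\cdot\tfrac{n+j+1}{n+j}\vol(P)$, and letting $j\to\infty$ recovers the sharp constant $\tfrac{d+1}{n+1}$. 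So the loss you identified is not structural but vanishes in the limit; this is exactly how the paper proves Theorem \ref{thm:affine_scc}, which covers the inequality in Theorem \ref{thm:wu} for all centered polytopes. If you want to salvage your certificate idea, you would need to exhibit $p,q$ at least in the extremal (join) configurations and prove the non-$A$ facet inequality from the constraints $\langle a_k,\centroid(F_i)\rangle\le 1$; as it stands, nothing is proved.
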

Here a polytope $P$ is  reflexive if the vectors $a_i$, $1\leq i\leq
m$, as well as the vertices of $P$ are points of $\Z^n$. In other words, $P$ and $\dual{P}$, the polar of $P$, are both lattice polytopes. 
A lattice polytope $P$ is said to be smooth if % (1)\ansgar{ich wuerde evtl das "(1)" und "(2)" weglassen. Fuer mich sieht das aus, als waere es ein Verweis auf eine Gleichung, die nicht existiert}
it is simple, i.e., each vertex  of
$P$ is contained in exactly $n$ facets $F_{j_1}, \dots, F_{j_n}$, say,
and % (2) in addition
the corresponding normals $a_{j_1},\dots,a_{j_n}$ form a lattice
basis of $\Z^n$, i.e., $(a_{j_1},\dots,a_{j_n})\Z^n=\Z^n$.

The purpose of this paper is to generalize K.-Y.~Wu's affine subspace
concentration inequalities to arbitrary centered polytopes. 

\begin{theorem}
	\label{thm:affine_scc}
	Let $P = \{x\in\R^n : \langle x,a_i\rangle \leq 1,\,1\leq
        i\leq m\}$
	be a centered polytope and let $A\subseteq\R^n$ be an affine subspace. Then, 
	\begin{equation}
	\label{eq:affine_scc}
	\sum_{i:\,a_i\in A} \vol(C_i)\leq \frac{\dim A+1}{n+1}\vol(P).
	\end{equation}
\begin{figure}[hbt]
\centering
\includegraphics[width=.6\textwidth]{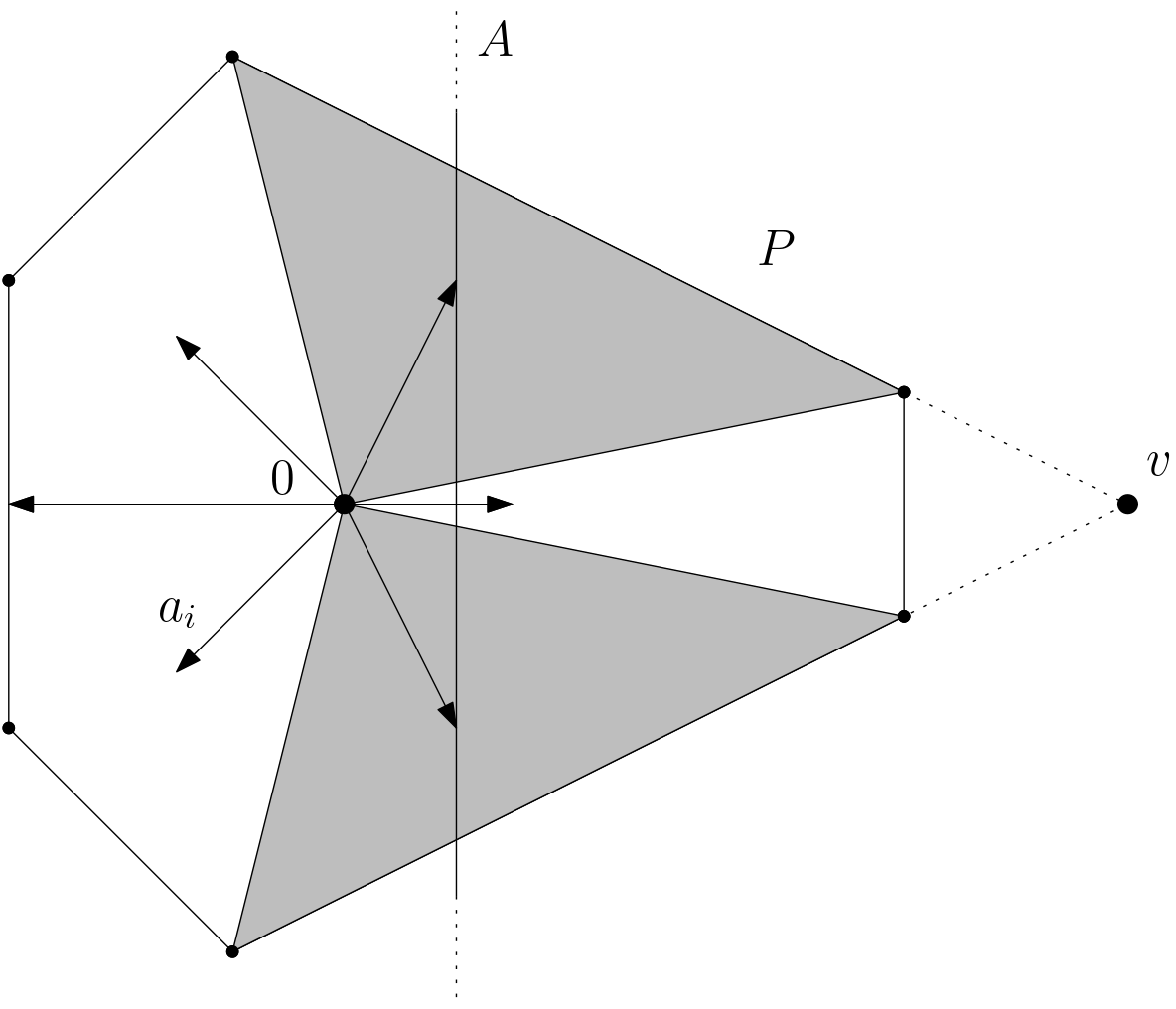}
\caption{The affine situation in Theorem \ref{thm:affine_scc}. Note that in general a subset of the $a_i$'s affinely spans a $k$-subspace, if and only if the affine hulls of the corresponding facets intersect in an $(n-1-k)$-subspace. Here, this is the single point $v$.}
\label{fig:affine}
\end{figure}
\end{theorem}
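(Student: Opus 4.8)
The plan is to be guided by the homogenization heuristic behind the exponents: an affine subspace $A$ of dimension $k=\dim A$ in $\R^n$ corresponds to a linear subspace of dimension $k+1$ in $\R^{n+1}$, and the target ratio $\frac{\dim A+1}{n+1}$ in \eqref{eq:affine_scc} is exactly the ratio of the linear condition \eqref{eq:scc} one dimension up. First I would dispose of the degenerate cases. If no $a_i$ lies in $A$ the left-hand side vanishes; and if $0\in A$, then $A$ is linear and Theorem~\ref{thm:henklinke} already gives the stronger bound $\frac{\dim A}{n}\vol(P)\le\frac{\dim A+1}{n+1}\vol(P)$ (as $k\le n$). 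Moreover I may assume the normals $a_i\in A$ \emph{affinely span} $A$, since otherwise I replace $A$ by their affine hull, which only decreases the right-hand side. So assume $0\notin A$ and set $L=\spann(A)$, a linear subspace of dimension $k+1$.

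The next step reduces the problem to a codimension-one incidence. The assumption that the $a_i\in A$ affinely span $A$ means exactly that the facet hyperplanes $\aff F_i=\{x:\ip{a_i}{x}=1\}$ of the contributing facets all contain one common flat $W$ of dimension $n-k-1$ (this is the incidence recorded in Figure~\ref{fig:affine}). Projecting orthogonally onto $L$ turns $P$ into a centered $(k+1)$-dimensional polytope $Q=\pi_L(P)$ with fibers of dimension $n-k-1$, and rewrites $\sum_{a_i\in A}\vol(C_i)$ as an integral over those facets of $Q$ whose hyperplanes concur in the image of $W$, weighted by the section-volume function $\phi(p)=\vol_{n-k-1}\bigl(P\cap\pi_L^{-1}(p)\bigr)$. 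By Brunn–Minkowski $\phi$ is $\frac{1}{n-k-1}$-concave on $Q$, and the centeredness of $P$ translates into the vanishing of the $\phi$-barycenter, $\int_Q p\,\phi(p)\,\mathrm dp=0$.

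The engine of the proof is the homothety $H_\lambda$ of $\R^n$ that fixes $W$ pointwise and scales the $k+1$ transverse directions by $\lambda$, so that $\vol(H_\lambda P)=\lambda^{k+1}\vol(P)$. Since $H_\lambda$ fixes precisely the hyperplanes $\aff F_i$ with $a_i\in A$, those facets drop out when one differentiates at $\lambda=1$. Differentiating the volume gives a first (metric-free) identity among the quantities $\vol(C_i)$ for $a_i\notin A$, while differentiating the first moment $\int_{H_\lambda P}x\,\mathrm dx$ gives a second identity in which centeredness, $\int_P x\,\mathrm dx=0$, forces a nonzero right-hand side proportional to the projection of the origin onto $W$. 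Combining these two identities with the $\frac{1}{n-k-1}$-concavity of $\phi$ and its vanishing barycenter should produce the sharp constant, the point being that the $n-k-1$ fiber directions are exactly what promote the naive linear denominator $n$ to $n+1$.

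I expect this last synthesis to be the main obstacle: the two differentiated identities are necessary but not sufficient, and closing the gap requires a Grünbaum-type inequality for an $s$-concave density with prescribed barycenter, applied to a \emph{radial} (cone) partition rather than to a halfspace cut—this is the single place where convexity beyond the bare identities, together with centeredness, is indispensable. Finally, the equality case should follow by tracing backward: equality forces the extremal configuration of that inequality, namely that $\phi$ (hence $P$) degenerates to a simplicial arrangement fanning out of $W$, which is precisely the statement that the $a_i$ split between $A$ and a complementary affine subspace $A'$, recovering the characterization in the manner of Theorem~\ref{thm:henklinke}.
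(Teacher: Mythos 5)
Your opening heuristic---that an affine $k$-subspace in $\R^n$ should be treated as a linear $(k+1)$-subspace one dimension up---is exactly the idea behind the paper's proof, and your degenerate-case reductions (replacing $A$ by the affine hull of $\{a_i : a_i \in A\}$, and handling $0 \in A$ via Theorem \ref{thm:henklinke} together with $\frac{k}{n} \le \frac{k+1}{n+1}$) are correct. But the core of your argument has a genuine gap, and you say so yourself: after projecting onto $L = \spann(A)$ and reformulating the problem in terms of the $\frac{1}{n-k-1}$-concave fiber-volume function $\phi$ with vanishing $\phi$-barycenter, the step that actually produces the constant $\frac{k+1}{n+1}$ is a ``Gr\"unbaum-type inequality for an $s$-concave density with prescribed barycenter, applied to a radial (cone) partition.'' That inequality is not a known off-the-shelf tool (the classical Gr\"unbaum-type results concern halfspace cuts, not cone partitions concurrent at a point $v \ne \centroid$), and proving it is essentially equivalent to proving the theorem itself; the two differentiated identities from the homothety $H_\lambda$ are, as you admit, not sufficient. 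A secondary inaccuracy: $Q = \pi_L(P)$ is not a \emph{centered} polytope (only the $\phi$-weighted barycenter vanishes), and the quantity you need to bound is $\frac{1}{n}\sum_i \frac{1}{\enorm{a_i}}\int_{G_i}\phi$, which is not the $\phi$-mass of the cones over the facets $G_i$ of $Q$, so even the reformulation needs more care than stated. Finally, your claim that the equality case ``should follow by tracing backward'' overreaches: the theorem as stated has no equality clause, and indeed the paper explicitly notes that its method gives no access to equality.

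For contrast, the paper executes the same homogenization heuristic in the opposite direction: instead of projecting down onto $L$, it lifts $P$ \emph{up}, setting $P^{(j)} = \pyr(P^{(j-1)})$ where $\pyr(Q) = \conv\big((Q \times \{1\}) \cup \{-(k+1)e_{k+1}\}\big)$. Lemma \ref{lemma:centered_pyramids} shows this lift preserves centeredness and the individual cone volumes, multiplies the total volume by $\frac{n+j+1}{n+j}$ at each step, and sends the normals $\{a_i : i \in I\}$ to vectors spanning a $(d+1)$-dimensional \emph{linear} subspace $L^{(j)} \subseteq \R^{n+j}$. Applying the known linear result (Theorem \ref{thm:henklinke}) to $P^{(j)}$ gives
\begin{equation}
\sum_{i\in I}\voln(C_i) \le \frac{d+1}{n+1}\cdot\frac{n+j+1}{n+j}\,\voln(P),
\end{equation}
and letting $j \to \infty$ yields \eqref{eq:affine_scc}. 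This sidesteps any new Gr\"unbaum-type inequality entirely---the hard convexity input is outsourced to the already-proven linear theorem---at the cost of losing the equality case in the limit. If you want to salvage your projection approach, the cone-partition inequality for $s$-concave densities is the precise statement you would need to supply; as it stands, your proposal assumes it rather than proves it.
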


Unlike Theorem \ref{thm:wu}, which covers the case of reflexive smooth polytopes,
our proof does not give us insight into the characterization of the
equality case. We are, however, able to treat the equality case in two special cases:
%, for which we need the
%following geometric description of the equality characterization.

%
%
%With the help of Proposition \ref{prop:geom_affine} we can show the following equality cases.

%We denote the polar polytope of $P$ by $P^\star = \{ y\in\R^n:\langle x,y\rangle\leq 1,\,\forall x\in P\}$.

\begin{theorem}
	\label{thm:pyramids}
	Let $P = \{x\in\R^n : \langle x,a_i\rangle \leq 1,\,1\leq i\leq m\}$ be a centered polytope.
	\begin{enumerate}
		\item If $A=\{a_i\}$ for some $1\leq i\leq m$, then equality holds
		in \eqref{eq:affine_scc} if and only if $P$ is a pyramid with
		base $F_i$. %base is contained in $\{x\in\R^n:\langle x,a_i\rangle = 1\}$.
		\item If $A$ is the hyperplane spanned by the $a_i$'s
		corresponding to all the facets containing a vertex $v$ of $P$, then equality holds in \eqref{eq:affine_scc} if and only if $P$ is a pyramid with apex $v$.
	\end{enumerate}
\end{theorem}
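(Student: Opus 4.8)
The inequalities in both parts are the instances $\dim A=0$ and $\dim A=n-1$ of \eqref{eq:affine_scc}, which we may take for granted; the whole content of Theorem~\ref{thm:pyramids} is therefore the equality discussion, and the plan is to reduce each case to a one-dimensional inequality for the section function of $P$ and to extract the pyramid from the equality case of Brunn--Minkowski. For part (1) I would rotate coordinates so that $a_i/\enorm{a_i}=e_n$, put $h=1/\enorm{a_i}$ (the distance of $F_i$ from the origin), and let $g(s)=\vol_{n-1}(P\cap\{x_n=s\})$ for $s$ in the range $[\beta,h]$ of the last coordinate on $P$, where $\beta<0<h$ since $\vnull\in\inte P$. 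Then $\phi:=g^{1/(n-1)}$ is concave on $[\beta,h]$ by Brunn--Minkowski, the centeredness of $P$ is exactly the vanishing first moment $\int_\beta^h s\,g(s)\,\mathrm ds=0$, and $\vol(C_i)=\tfrac1n\,h\,g(h)$ because $g(h)=\vol_{n-1}(F_i)$. Thus \eqref{eq:affine_scc} becomes $h\,g(h)\le\frac{n}{n+1}\int_\beta^h g$.

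Next, writing $\psi(t)=\phi(h-t)$ on $[0,H]$ with $H=h-\beta$ and eliminating $h$ through the moment condition, the inequality turns into
\[ \psi(0)^{n-1}\int_0^H t\,\psi(t)^{n-1}\,\mathrm dt\ \le\ \frac{n}{n+1}\Big(\int_0^H\psi(t)^{n-1}\,\mathrm dt\Big)^2, \]
a moment inequality for the nonnegative concave function $\psi$ whose extremal (``conical'') case is the affine $\psi$ vanishing at $t=H$; this is a Berwald-type statement and I would isolate its rigidity as a separate one-dimensional lemma. Tracing equality back, $\phi$ must be affine on $[\beta,h]$ with $\phi(\beta)=0$. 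Affinity of $\phi$ is precisely the equality case of Brunn--Minkowski and forces the sections $P\cap\{x_n=s\}$ to be homothetic, while $\phi(\beta)=0$ makes them contract to the single point $\beta e_n$; hence $P=\conv(\{\beta e_n\}\cup F_i)$ is a pyramid with base $F_i$ (and apex at height $\beta=-nh$). The converse is the direct computation for a centered pyramid, which returns the ratio $\tfrac1{n+1}$.

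For part (2) I would first observe that each facet through $v$ satisfies $\ip{a_i}{v}=1$, so the spanned hyperplane is $A=\{a:\ip{a}{v}=1\}$ and $\{i:a_i\in A\}=\{i:v\in F_i\}=:J$; equality then reads $\sum_{i\notin J}\vol(C_i)=\frac1{n+1}\vol(P)$. The plan is to reduce to part (1): show that equality forces $P$ to have a \emph{unique} facet $F_0$ missing $v$, together with a conical section profile, so that $\vol(C_0)=\frac1{n+1}\vol(P)$ and part (1) identifies $P$ as a pyramid with base $F_0$. Once $P$ is known to be a pyramid, a counting step closes the case: for a centered pyramid $\vol(C_0)=\frac1{n+1}\vol(P)$, so the equality hypothesis gives $\sum_{i\in J}\vol(C_i)=\vol(P)-\vol(C_0)=\sum_{i\ne 0}\vol(C_i)$, i.e. every facet other than $F_0$ contains $v$; since the lateral facets of a pyramid meet only in its apex, $v$ must be that apex. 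The converse is again the computation from part (1), read complementarily.

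The principal obstacle is the equality analysis, and it is genuinely harder in (2) than in (1). In (1) it is the rigidity of the one-dimensional moment inequality plus the Brunn--Minkowski equality condition, both classical. In (2), however, the sum $\sum_{i\in J}\vol(C_i)=\vol\big(\bigcup_{i\in J}C_i\big)$ does not coincide with any cap of $P$ cut off by a single hyperplane, so the clean direction-$a_i$ slicing of part (1) is unavailable. To establish that equality leaves only one facet opposite $v$, I expect to slice in a direction $d$ in the relative interior of the normal cone of $v$ (so that $v$ is the unique $\ip{d}{\cdot}$-maximal vertex and the top sections contract to $v$) and to combine the resulting section inequality with Brunn--Minkowski rigidity to rule out $|\{i:v\notin F_i\}|\ge 2$. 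Carrying out this uniqueness argument is where the bulk of the effort will lie.
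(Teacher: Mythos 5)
Your part (1) is essentially the paper's own argument in different packaging: the paper also slices orthogonally to $a_i$, invokes Brunn's concavity principle, and compares the section profile of $P$ with the affine (conical) one; your moment inequality for concave $\psi$ is exactly that comparison after eliminating $h$ via the centeredness condition. The caveat is that you leave this inequality \emph{and its rigidity} as an unproven ``Berwald-type'' lemma. Since the non-strict inequality already follows from Theorem \ref{thm:affine_scc}, the deferred rigidity statement (equality forces $\psi$ affine and vanishing at the far endpoint) is the entire content of part (1), so it cannot simply be cited; it needs an argument. The paper supplies one: compare $P$ with the equal-volume pyramid $Q$ over $F_i$, note that $g-f$ is convex on $\supp(f)$ (affine minus concave), locate the single sign change $\tau$, and compare first moments; the conversion of ``affine profile vanishing at an endpoint'' into ``pyramid'' is then Lemma \ref{lemma:pyr_affine}, which you essentially re-derive inline via the Brunn--Minkowski equality case. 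This gap is fillable, but it is not filled.

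The genuine gap is in part (2). Your skeleton --- equality forces a \emph{unique} facet $F_0$ avoiding $v$, then $\vol(C_0)=\frac{1}{n+1}\vol(P)$ and part (1) finishes, with a counting step identifying $v$ as the apex --- is a correct reduction (the paper's geometric proof ends by establishing exactly this uniqueness). But the uniqueness step is the whole difficulty, and your proposal for it (slice in a direction $d$ in the relative interior of the normal cone at $v$ and invoke Brunn--Minkowski rigidity) is not carried out and does not obviously work: the quantity $\sum_{i:\,a_i\in A}\vol(C_i)$ is not determined by the section function of $P$ in any single fixed direction, so rigidity for one slicing direction cannot by itself exclude two facets missing $v$. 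The paper needs a new idea here and gives two. The geometric proof uses the classical fact that centeredness forces $-\frac{1}{n}v\in P$; equality in \eqref{eq:overlinec} then forces every facet avoiding $v$ to lie in a hyperplane through $-\frac{1}{n}v$, and a centroid computation over the subdivision $P=\bigcup_{j}\conv(F_j\cup\{v\})$ yields $1=\sum_{k}\frac{\vol(Q_k)}{\vol(P)}\langle \centroid(F_k),a_j\rangle<1$ as soon as two such facets exist --- a contradiction. The analytic proof circumvents precisely your ``no cap'' objection: although $\bigcup_{i:\,a_i\in A}C_i$ is not a cap, the \emph{affine} functional $f(x)=\sum_{i:\,a_i\in A}\vol\big(\conv(F_i\cup\{x\})\big)$ satisfies $\E[f(X)]=\sum_{i:\,a_i\in A}\vol(C_i)$, its level sets are parallel hyperplanes, $f(v)=0$, and a Gr\"unbaum-style estimate on $p(t)=\Prob_X(f<t)^{1/n}$ (bounded below by $t/\max f(P)$ via the cone inclusion from $v$) gives both the inequality and, through Lemma \ref{lemma:pyr_affine}, the pyramid conclusion. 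Without one of these devices, or a worked-out substitute, your part (2) is a plan rather than a proof.
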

As a byproduct of the proof of Theorem \ref{thm:pyramids}, we will see
alternative proofs of \eqref{eq:affine_scc} in these special cases.  
The first case of Theorem \ref{thm:pyramids} slightly generalizes a
former result by Zhou and He \cite[Thm.~1.2]{zh17}. There an additional technical assumption on
$P$ is made. 
We also point out that for simple polytopes Theorem
\ref{thm:pyramids} implies the following corollary:
\begin{corollary} 
\label{cor:simple}
	Let $P = \{x\in\R^n : \langle x,a_i\rangle \leq
	1,\,1\leq i\leq m\}$ be a centered simple polytope. Let
	$A\subset\R^n$ be an affine subspace spanned by $a_i$'s
	corresponding to all the facets containing a $k$-face of $P$ with
	$1\leq k\leq n-1$.  Then we have equality in
	\eqref{eq:affine_scc} if and only if $P$ is a centered simplex.
\end{corollary}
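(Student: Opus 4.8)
The plan is to deduce Corollary~\ref{cor:simple} directly from Theorem~\ref{thm:pyramids}(2), exploiting the special structure of simple polytopes. First I would observe that if $P$ is simple, then every vertex $v$ lies in exactly $n$ facets, whose normals $a_{j_1},\dots,a_{j_n}$ form an affine basis spanning a hyperplane (an $(n-1)$-dimensional affine subspace). More generally, a $k$-face $G$ of a simple polytope is contained in exactly $n-k$ facets, and the normals of these facets span a $(n-k-1)$-dimensional affine subspace $A$; the equality condition in \eqref{eq:affine_scc} for such an $A$ should be analyzed via the relationship between faces and the facets containing them.

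The key step is to reduce the $k$-face case to the vertex case treated in Theorem~\ref{thm:pyramids}(2). I would argue as follows: suppose $A$ is spanned by the $a_i$'s of all facets containing a $k$-face $G$, and suppose equality holds in \eqref{eq:affine_scc}. Pick any vertex $v$ of $P$ contained in $G$ (such a vertex exists since $G$ is a face). Because $P$ is simple, the $n-k$ facets through $G$ are among the $n$ facets through $v$, so $A$ is a proper affine subspace of the hyperplane $A_v$ spanned by all $a_i$'s of the facets through $v$. The monotonicity of the left-hand side of \eqref{eq:affine_scc} together with the inequality for $A_v$ should force equality at every intermediate dimension, in particular equality for the hyperplane $A_v$. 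Then Theorem~\ref{thm:pyramids}(2) applies and yields that $P$ is a pyramid with apex $v$.

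The remaining work is to bootstrap from ``$P$ is a pyramid with apex $v$'' to ``$P$ is a simplex.'' Here I would use simplicity crucially: a simple polytope whose apex $v$ is a pyramid apex has the property that the base facet $F$ opposite $v$ must itself be simple and the lateral facets correspond to the $n$ facets through $v$. Iterating---either by induction on dimension applied to the base, or by counting facets---forces $P$ to have exactly $n+1$ facets, hence to be a simplex. Conversely, for a centered simplex one checks directly that equality holds in \eqref{eq:affine_scc} for the relevant $A$, which is most easily seen from the symmetry of the cone-volume decomposition of a simplex about its centroid.

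I expect the main obstacle to be the inductive/combinatorial step showing that a simple pyramid is a simplex, since one must carefully track how simplicity of $P$ descends to the base facet and rule out non-simplex pyramids; handling the boundary dimensions $k=1$ and $k=n-1$ and verifying that the equality case for intermediate $A$ genuinely propagates to the hyperplane $A_v$ (rather than only to some smaller subspace) will require care in applying the monotonicity argument.
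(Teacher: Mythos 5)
Your reduction to the vertex case contains a genuine gap, and it is the central step: equality in \eqref{eq:affine_scc} does \emph{not} propagate upward from $A$ to the larger hyperplane $A_v$. Write $A_v$ for the hyperplane spanned by the normals of the $n$ facets through a vertex $v$ of the $k$-face, so $A\subseteq A_v$ and $\dim A = n-k-1$. If equality holds for $A$, then combining it with \eqref{eq:affine_scc} applied to $A_v$ gives only
\begin{equation*}
\frac{n-k}{n+1}\vol(P) \;+\; \sum_{i:\,a_i\in A_v\setminus A}\vol(C_i)
\;=\; \sum_{i:\,a_i\in A_v}\vol(C_i)
\;\leq\; \frac{n}{n+1}\vol(P),
\end{equation*}
i.e.\ an \emph{upper} bound $\sum_{i:\,a_i\in A_v\setminus A}\vol(C_i)\leq \frac{k}{n+1}\vol(P)$ on the additional cone volumes. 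To conclude equality for $A_v$ you would need this to hold with equality, i.e.\ a \emph{lower} bound on those extra terms, and nothing in \eqref{eq:affine_scc} or in any monotonicity of the left-hand side supplies such a bound. Indeed, within the class of centered simple polytopes, the implication ``equality for $A$ $\Rightarrow$ equality for $A_v$'' is essentially equivalent to the corollary itself (both amount to $P$ being a simplex, via Theorem \ref{thm:pyramids} ii)), so this step begs the question; no amount of care in the monotonicity argument will rescue it.

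The paper's proof runs in the opposite direction: downward to singletons, where the propagation \emph{is} sound. By simplicity, the $n-k$ normals of the facets containing the $k$-face are linearly, hence affinely, independent, so $\dim A+1 = n-k$, and the right-hand side of \eqref{eq:affine_scc} for $A$ is exactly the sum of the singleton bounds $\vol(C_i)\leq\frac{1}{n+1}\vol(P)$ from Theorem \ref{thm:pyramids} i). Thus equality for $A$ forces equality in \emph{each} singleton bound, whence $P$ is a pyramid with base $F_i$ for each of these facets, and simplicity finishes the argument exactly as in your final step (the apex lies in all facets but one, so $m-1=n$, i.e.\ $P$ has $n+1$ facets and is a simplex). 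That bootstrapping step of yours, and your converse direction (for a centered simplex every cone volume is $\vol(P)/(n+1)$ and the normals are affinely independent), are both fine; the irreparable part is the upward propagation, and replacing it by the downward decomposition into singleton inequalities is precisely what the paper does.
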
   

In contrast to the description of the equality case in Theorem \ref{thm:wu}, the descriptions of the equality cases in Theorem \ref{thm:pyramids} do not explicitly refer to the normal vectors $a_i$. The following proposition gives an equivalent formulation of the equality case in Theorem \ref{thm:wu}; it shows that the two conditions in Theorem \ref{thm:pyramids} are indeed special cases of the general description in terms of the $a_i$'s: 

\begin{proposition}\label{prop:eq_case} Let $P = \{x\in\R^n : \langle x,a_i\rangle \leq
	1,\,1\leq i\leq m\}$. Then there exist a proper affine subspace $A$ and a complementary  affine
	subspace $A'$ such that $\{a_i:1\leq i \leq m\}\subseteq A
        \cup A'$ 
	if and only if $P$ can be written as
	\begin{equation*} 
	P=\conv(Q_1\cup Q_2),
	\end{equation*} 
	where $Q_1, Q_2\subset\R^n $ are  polytopes with $\dim Q_1+\dim
	Q_2=n-1$  and 
	$\aff Q_1\cap \aff Q_2=\emptyset$.
	\label{prop:geom_affine}
\end{proposition}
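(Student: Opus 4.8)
The plan is to recognize that both conditions in the proposition express the same notion: $P=\conv(Q_1\cup Q_2)$ with $\aff Q_1\cap\aff Q_2=\emptyset$ and $\dim Q_1+\dim Q_2=n-1$ says precisely that $P$ is a \emph{join} of $Q_1$ and $Q_2$ in skew position. Indeed, writing $L_i$ for the linear subspace parallel to $\aff Q_i$, disjointness of the affine hulls gives $\dim\conv(Q_1\cup Q_2)=\dim Q_1+\dim Q_2-\dim(L_1\cap L_2)+1$, so the full-dimensionality of $P\in\Pn_o$ together with $\dim Q_1+\dim Q_2=n-1$ forces $L_1\cap L_2=\{0\}$, i.e.\ genuinely skew position. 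Dually, the left-hand condition says exactly that the vertex set $\{a_i\}$ of $\dual P$ splits into two skew affine subspaces $A,A'$ with $\dim A+\dim A'=n-1$; by the same dimension count this is equivalent to $\dual P=\conv(R_1\cup R_2)$ being a join, with $R_1=\conv\{a_i\in A\}$ and $R_2=\conv\{a_i\in A'\}$. Thus the proposition is equivalent to the assertion that \emph{$P$ is a join if and only if $\dual P$ is a join}, and since $\dual{(\dual P)}=P$ it suffices to prove one implication, say that $P$ a join implies $\dual P$ a join.

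To prove this implication I would put the configuration into a normal form by a linear map $T$ fixing the origin, which preserves the join property and, via $\dual{(TP)}=T^{-\top}\dual P$, the dual side. Concretely, writing points of $\R^n$ as $(y,z,t)\in\R^{d_1}\times\R^{d_2}\times\R$ with $d_i=\dim Q_i$ and $d_1+d_2=n-1$, I would arrange $\aff Q_1=\{(y,0,s_1):y\in\R^{d_1}\}$ and $\aff Q_2=\{(0,z,s_2):z\in\R^{d_2}\}$ with $s_1\neq s_2$. That such a \emph{linear} (not merely affine) map exists follows from elementary linear algebra: map $L_1,L_2$ onto the $y$- and $z$-coordinate subspaces, send a transversal direction $p_1-p_2$ ($p_i\in\aff Q_i$) to the $t$-axis, and clear the residual $y$- and $z$-offsets of the two subspaces by two shears in the $t$-variable. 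The hypothesis $0\in\inte P$ then forces $s_1>0>s_2$ (the origin lies strictly between the two slices) and, as I check below, places the axis points $(0,0,s_1),(0,0,s_2)$ in $\mathrm{relint}\,Q_1,\mathrm{relint}\,Q_2$.

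The computational core is the determination of the facets of the join $P=\conv(Q_1\cup Q_2)$: every facet has the form $(\text{facet of }Q_1)*Q_2$ or $Q_1*(\text{facet of }Q_2)$. For a facet $Q_1*F$, where $F\subset\aff Q_2$ has relative supporting hyperplane $\{z:\langle b,z\rangle=c\}$, the requirement $\langle a,x\rangle=1$ on $\aff Q_1$ forces the $y$-part of the normal $a$ to vanish and its $t$-part to equal $1/s_1$, while the requirement on $\aff F$ forces the $z$-part to be a multiple of $b$. Hence every such normal lies in the $d_2$-dimensional subspace $A_2=\{(0,z,1/s_1):z\in\R^{d_2}\}$; symmetrically, every normal of a facet $(\text{facet of }Q_1)*Q_2$ lies in the $d_1$-dimensional subspace $A_1=\{(y,0,1/s_2):y\in\R^{d_1}\}$. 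Since $1/s_1\neq 1/s_2$, the subspaces $A_1,A_2$ are skew with $\dim A_1+\dim A_2=n-1$, and all vertices $\{a_i\}$ of $\dual P$ lie in $A_1\cup A_2$; by the reformulation this says $\dual P$ is a join. Applying the implication to $\dual P$ yields the converse and finishes the proof.

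The main obstacle I anticipate lies in the degenerate and boundary situations rather than in the central normal computation. The step forcing the $z$-part of the normal to be a multiple of $b$ needs $c\neq 0$, i.e.\ that no facet hyperplane of $Q_2$ passes through the axis point; this is exactly what $(0,0,s_2)\in\mathrm{relint}\,Q_2$ guarantees, and I must extract that from $0\in\inte P$ by testing membership of small vectors $(\delta^y,\delta^z,0)$ in $\conv(Q_1\cup Q_2)$ and solving the resulting convex-combination constraints. The cases $\dim Q_i=0$ (where $Q_i$ is a point and $P$ is a pyramid, so the ``facets of $Q_i$'' collapse to the single base facet) must be treated separately but reduce to the same conclusion. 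Finally one must record the sign/boundedness dichotomy---that $s_1,s_2$ cannot share a sign---since otherwise the slices $P\cap\{t=\mathrm{const}\}$ would fail to be bounded, contradicting that $P$ is a polytope; this is precisely what rules out the origin lying outside the slab between $\aff Q_1$ and $\aff Q_2$.
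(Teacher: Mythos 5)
Your proposal is correct, and it shares the paper's overall skeleton: both arguments identify the $a_i$ with the vertices of $P^\star$, observe that each side of the equivalence says that $P$ (resp.\ $P^\star$) is a join, and then use $(P^\star)^\star=P$ to reduce everything to the single implication ``$P$ a join $\Rightarrow$ $P^\star$ a join.'' Where you genuinely diverge is in the proof of that implication. The paper stays coordinate-free: it first shows that $Q_1$ and $Q_2$ are faces of $P$ (via a linear functional vanishing on both direction spaces), passes to the polar faces $P_i=Q_i^\diamond$ of dimensions $n-d-1$ and $d$, and proves $P^\star=\conv(P_1\cup P_2)$ by contradiction --- a vertex $v$ of $P^\star$ outside $P_1\cup P_2$ would polarize to a facet $F=v^\diamond$ with $F=\conv\bigl((F\cap Q_1)\cup(F\cap Q_2)\bigr)$, hence $\dim F\leq n-2$. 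You instead normalize by a linear map so that $\aff Q_1,\aff Q_2$ become coordinate slices at heights $s_1>0>s_2$ and read off the facet normals of the join explicitly, producing the two skew affine subspaces $A_1,A_2$ in coordinates. Your route is more constructive (it exhibits $A$ and $A'$ concretely, and your anticipated checks --- that $0\in\inte P$ forces the sign dichotomy and places the axis points in $\mathrm{relint}\,Q_i$, which kills the $c=0$ degeneracy --- do go through as you describe); the paper's route needs no normalization and no facts about joins beyond what it proves on the spot.

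That last point is your one soft spot: the assertion that every facet of $\conv(Q_1\cup Q_2)$ is either $Q_1*(\text{facet of }Q_2)$ or $(\text{facet of }Q_1)*Q_2$ is precisely the crux of the whole proposition, and you state it rather than prove or cite it. It is a standard fact (it can be cited from the Handbook chapter the paper references for joins), and in your normalized coordinates it has a short proof: since $s_1\neq s_2$, every point of the join has a unique representation $\lambda q_1+(1-\lambda)q_2$ (the $t$-coordinate determines $\lambda$), so any supporting hyperplane $H$ satisfies $P\cap H=\conv\bigl((Q_1\cap H)\cup(Q_2\cap H)\bigr)$, and the count $\dim(P\cap H)\leq\dim(Q_1\cap H)+\dim(Q_2\cap H)+1$ forces $Q_1\subseteq H$ or $Q_2\subseteq H$ when $P\cap H$ is a facet. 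Note that this is exactly the paper's contradiction step in dual form, so the two proofs coincide at their mathematical core. Two minor corrections: your shears require the $t$-offsets of the two affine hulls to be nonzero, which is the same consequence of $0\in\inte P$ as the sign condition, so interiority must be invoked before, not after, the normalization; and the closing remark about unbounded slices is off --- slices of a polytope are always bounded --- but it is harmless, since the dichotomy $s_1>0>s_2$ already follows from interiority of the origin, as you correctly argue earlier.
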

%\begin{proposition}\label{prop:eq_case}
%	Let $P=\{x\in\R^n : \langle x,a_i\rangle\leq 1,~\forall i\in [m]\}$ be a polytope and let $d\in \{0,\dots,n-1\}$. The following are equivalent.
%	\begin{enumerate}
%		\item There exists an affine $d$-subspace $A$ and an affine $(n-d-1)$-subspace $A'$ complementary to $A$ such that $a_i\in A\cup A'$, for all $i\in [m]$.
%		\item There exists a $d$-polytope $Q_1$ and an $(n-d-1)$-polytope $Q_2$ such that $\aff(Q_1)$ and $\aff(Q_2)$ are complementary affine spaces and $P = \conv(Q_1\cup Q_2)$.
%	\end{enumerate}
%\end{proposition}
Proposition \ref{prop:eq_case} appears to be well-known, but since we are not aware of a proof in the literature, we provide one in Section
\ref{sec:case_of_eq}. We note that the corresponding equality
statement in the case of  linear subspaces (see Theorem \ref{thm:henklinke})   gives
$P=Q_1+Q_2$ where $Q_1, Q_2\subset\R^n$ are polytopes with $\dim
Q_1+\dim Q_2=n$ and $\lin Q_1\cap \lin Q_2=\{0\}$ (see, e.g.,
\cite[Sect.~3]{hl14}). Here $\lin S$ denotes the linear hull of a subset $S\subseteq\R^n$.    

The rest of the paper is organized as follows: Section \ref{sec:basics} contains some preliminaries. In Section \ref{sec:pfs}, we prove Theorem \ref{thm:affine_scc} and Theorem \ref{thm:pyramids}. We give two proofs for Theorem \ref{thm:pyramids} ii), one geometric and one analytic. Finally, in Section \ref{sec:case_of_eq}, we discuss the geometric meaning of the equality case in Theorem \ref{thm:wu} and prove Proposition \ref{prop:eq_case}. 

\section{Preliminaries}
\label{sec:basics}
In this section we give a brief overview of the concepts that are
necessary for the understanding of the paper. We refer to \cite{zie07}
for a detailed introduction into the theory of polytopes and their
face structure, and to \cite{ArtsteinAvidanGiannopoulosMilman2015,Gardner2006,Gruber2007,sch13} for exhaustive background information on Convex Geometry.

\subsection{Polytopes}\label{subsec:polytopes}
A \emph{polytope} $P\subset\R^n$ is, by definition, the convex hull of a finite set $X\subset\R^n$. By the Minkowski-Weyl theorem, $P$ may be represented as 
\[
P = \{ x\in\R^n\colon \langle a_i,x\rangle\leq b_i,~1\leq i \leq m\},
\]
for certain $a_1,\dots,a_m\in\R^n$ and
$b_1,\dots,b_m\in\R$. Conversely, the right-hand side in the above
equation defines a polytope, whenever the set is bounded. We say that
this description is \emph{irredundant} if none of the constraints
$\langle a_i,x\rangle\leq b_i$ may be omitted without changing the
polytope. In this case, the set $F_i = P\cap\{x\in\R^n :\langle
x,a_i\rangle = b_i\}$ is an $(n-1)$-dimensional polytope in the boundary of $P$ and it is called a \emph{facet} of $P$. More generally, a convex subset $F\subseteq P$ with the property that $\lambda x +(1-\lambda)y\in F$, for some $\lambda\in (0,1)$ and $x,y\in P$, implies $x,y\in F$ is called a \emph{face} of $P$. This is equivalent to the existence of a hyperplane $H$ such that $F=P\cap H$ and $P$ is contained in one of the closed half spaces defined by $H$.

%If $P$ is an $n$-dimensional polytope $P$ with an irredundant description 
%\begin{equation}
%\label{eq:irredundant_descr}
%P = \{ x\in\R^n : \langle x,a_i\rangle\leq 1,~1\leq i\leq m\},
%\end{equation}
%then the origin is an interior point of $P$ \martin{couldn't it be
%  $(n-1)$-dimensional? we have the class $\Pn_o$ in the introduction defined}  and we denote by $C_i = \conv(F_i\cup\{0\})$ the \emph{cone} of $P$ corresponding to the outer normal $a_i$. Moreover, in this scenario
  For $P\in\Pn_o$ with an irredundant description
\begin{equation}
\label{eq:irredundant_descr}
P = \{ x\in\R^n : \langle x,a_i\rangle\leq 1,~1\leq i\leq m\},
\end{equation}  
    one defines the \emph{polar} polytope of $P$ as
\[
P^\star = \{y\in\R^n: \langle x,y\rangle\leq 1,~\forall x\in P\} = \conv\{a_1,\dots,a_m\}.
\]
While the inequality description is certainly redundant, the convex hull description is not; the $a_i$'s are precisely the vertices of $P$. 

There is an even stronger duality between the faces of $P$ and $P^\star$. For a face $F\subset P$ of dimension $d\in\{0,\dots,n-1\}$ one defines its polar face as \[
F^\diamond = \{ y\in P^\star : \langle x,y\rangle = 1,~\forall x\in F\}. 
\]
It turns out that $F^\diamond$ is indeed an $(n-d-1)$-face of $P^\star$ and that any $(n-d-1)$-faces of $P^\star$ arises this way. Moreover, we have $(F^\diamond)^\diamond = F$ and $F^\diamond\supseteq G^\diamond$ for $F\subseteq G$.

\subsection{Volume and Centroids}
For a $k$-dimensional polytope $P\subset\R^n$ we denote by $\vol_k(P)$
its $k$-dimensional volume within its affine hull. Likewise, we define
$\centroid(P) = \vol_k(P)^{-1}\int_P x\,\mathrm{d}^k x$ as its centroid. We will need the following formula in order to compute the centroid of a pyramid.

\begin{lemma}
\label{lemma:pyr_formula}
Let $F\subset\R^n$ be an $(n-1)$-dimensional polytope and $v\notin\aff F$. Then,
\begin{equation}
    \label{eq:pyramidcenter}
    \centroid \big(\conv(F\cup\{v\})\big) = \frac{n}{n+1}\centroid(F) + \frac{1}{n+1}v.
\end{equation}
\end{lemma}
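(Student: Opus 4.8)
The plan is to foliate the pyramid $P = \conv(F\cup\{v\})$ by the hyperplanes parallel to $\aff F$ and to integrate the cross-sectional data against the height. First I would fix a unit vector normal to $\aff F$ and let $h>0$ be the distance from $v$ to $\aff F$, so that every point of $P$ lies in a unique slice at height $s\in[0,h]$ measured from $\aff F$ towards $v$. Writing $t=s/h\in[0,1]$, the slice at parameter $t$ is precisely the image of $F$ under the homothety $y\mapsto v+(1-t)(y-v)$ centered at $v$ with ratio $1-t$, since a point $(1-t)y+tv$ with $y\in F$ is exactly $v+(1-t)(y-v)$.

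The key observations are then purely affine. As homotheties are affine maps and affine maps carry centroids to centroids, the centroid of the slice at parameter $t$ equals $v+(1-t)(\centroid(F)-v)=(1-t)\centroid(F)+tv$, while its $(n-1)$-dimensional volume is $(1-t)^{n-1}\vol_{n-1}(F)$. Integrating the slice volume over the height recovers the pyramid formula $\vol_n(P)=\tfrac{h}{n}\vol_{n-1}(F)$, and the slicing formula for the centroid gives
\begin{equation*}
\centroid(P) = \vol_n(P)^{-1}\int_0^h \big((1-s/h)^{n-1}\vol_{n-1}(F)\big)\,\big((1-s/h)\centroid(F) + (s/h)v\big)\,\mathrm{d}s.
\end{equation*}

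After the substitution $t=s/h$, the prefactor $\vol_n(P)^{-1}\,\vol_{n-1}(F)\,h$ collapses to $n$, and the claim reduces to the two elementary one-dimensional integrals $\int_0^1 (1-t)^n\,\mathrm{d}t=\tfrac{1}{n+1}$ and $\int_0^1 t(1-t)^{n-1}\,\mathrm{d}t=\tfrac{1}{n(n+1)}$; substituting these produces the coefficients $\tfrac{n}{n+1}$ for $\centroid(F)$ and $n\cdot\tfrac{1}{n(n+1)}=\tfrac{1}{n+1}$ for $v$, which is exactly \eqref{eq:pyramidcenter}.

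I expect the only point requiring genuine care to be the justification of the slicing identity for the centroid, namely that the vector-valued integral of $x$ over $P$ decomposes as the height-integral of (slice volume) times (slice centroid). This is Fubini applied coordinate-wise, combined with the identification of each slice as a homothetic copy of $F$; once the homothety description of the slices is established, the remainder is a direct elementary computation.
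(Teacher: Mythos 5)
Your proof is correct and follows essentially the same route as the paper: slicing the pyramid by hyperplanes parallel to $\aff F$, identifying each slice as a homothetic copy of $F$, and evaluating the resulting beta-type integrals via Fubini. The only difference is cosmetic --- the paper first normalizes by affine equivariance (placing $F$ in $\{x_n=0\}$ with $\centroid(F)=0$ and $v=e_n$) so the computation becomes scalar, whereas you carry out the same integral in general position.
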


\begin{proof}
As $\centroid(\cdot)$ is affinely equivariant, it is enough to
consider the case where $F\subseteq\{x\in\R^n: x_n=0\}$, $\centroid(F)=0$ and $v=e_n$, where
$e_n$ denotes the $n$-th standard unit vector. Let $H_t = \{x\in\R^n \colon x_n=t\}$. Using Fubini's theorem, we have
\[
\centroid(P) = \frac{1}{\vol(P)}\int_0^1 \vol_{n-1}(P\cap H_t)\,\centroid(P\cap H_t)\,\mathrm dt.
\]
In our setting, we have $P\cap H_t = (1-t)F + t e_n$. Thus, it follows that
\[
\begin{split}
\centroid(P) &=  \frac{\vol_{n-1}(F)}{\vol(P)} \Big(\int_0^1 (1-t)^{n-1} t\,\mathrm{d}t\Big)\,e_n\\
&= \frac{\vol_{n-1}(F)}{\vol(P)} \frac{1}{n(n+1)}e_n = \frac{1}{n+1}e_n,
\end{split}
\]
where the last equality follows from the fact that $P$ is a pyramid with height one over $F$ and therefore $\vol(P) = \vol_{n-1}(F)/n$. Given our assumptions, the proof of the Lemma is finished.
\end{proof}

Moreover, we are going to make use of the following additivity
property of the centroid. Consider a finite family of convex bodies
$K_1,\dots,K_m\subseteq\R^n$ whose union $K=K_1\cup\dots\cup K_m$ is
again a convex body and suppose that $K_i\cap K_j$ is a set of
Lebesgue measure zero for all $i\neq j$. Then we have
\begin{equation}
\label{eq:additivity}
\centroid (K) = \frac{1}{\vol (K)}\big(\vol(K_1)\centroid(K_1) + \cdots +\vol(K_m)\centroid(K_m)\big).
\end{equation}

Finally, the following lemma will be used in the proof of Theorem \ref{thm:pyramids}. Here and in the following, for $u \in \R^n$, $u^\bot$ denotes the orthogonal complement of $\lin\{u\}$.

\begin{lemma}\label{lemma:pyr_affine}
	Let $P\subset \R^n$ be an $n$-dimensional polytope, $u \in
        \sph^{n-1}$ and let $f\colon \R\rightarrow \R$, given by
        \begin{equation*} f(t)= \vol_{n-1}((tu+u^\bot) \cap
          P)^{\frac{1}{n-1}}.
        \end{equation*}   
          Let $[\alpha,\beta]=\supp(f)$. If $f$ is affine on $[\alpha,\beta]$ and $f(\beta)=0$, then $P$ is a pyramid with base $(\alpha u+u^\bot) \cap P$ and apex $(\beta u+u^\bot) \cap P$.
\end{lemma}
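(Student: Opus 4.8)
The plan is to exploit the concavity of $f$ coming from the Brunn--Minkowski inequality and to read off the pyramidal structure of $P$ from the corresponding equality case. After an affine change of coordinates I may assume $u=e_n$ and $u^\bot=\{x_n=0\}$, and I identify each slice $P_t:=(tu+u^\bot)\cap P$ with a convex subset of $\R^{n-1}$. For heights $t_0<t_2$ in $(\alpha,\beta)$ and $t_1=(1-\lambda)t_0+\lambda t_2$, convexity of $P$ gives the inclusion $P_{t_1}\supseteq(1-\lambda)P_{t_0}+\lambda P_{t_2}$, so that
\[
\vol_{n-1}(P_{t_1})^{\frac{1}{n-1}}\ge \vol_{n-1}\big((1-\lambda)P_{t_0}+\lambda P_{t_2}\big)^{\frac{1}{n-1}}\ge (1-\lambda)f(t_0)+\lambda f(t_2),
\]
the second step being Brunn--Minkowski in dimension $n-1$. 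Hence $f$ is concave. Since $f$ is assumed affine with $f(\beta)=0$ and $[\alpha,\beta]=\supp(f)$ is nondegenerate (as $P$ is $n$-dimensional), $f$ cannot also vanish at $\alpha$, so $f(t)=c\,(\beta-t)$ for a constant $c>0$ on $[\alpha,\beta]$.

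Next, the hypothesis that $f$ is affine turns the whole chain above into equalities. The equality in the Brunn--Minkowski step forces $P_{t_0}$ and $P_{t_2}$ to be homothetic, i.e. $P_{t_2}=\mu P_{t_0}+w$ for some $\mu>0$ and $w\in\R^{n-1}$; and the equality in the first (monotonicity) step, together with the inclusion, forces $P_{t_1}=(1-\lambda)P_{t_0}+\lambda P_{t_2}$ exactly. Fixing one reference slice, I then write every slice as $P_t=\lambda(t)\,P_{\mathrm{ref}}+v(t)$. The scaling factor is pinned down by the volume identity $f(t)=c(\beta-t)$, giving $\lambda(t)=(\beta-t)/(\beta-t_{\mathrm{ref}})$, which is affine in $t$; substituting the homothety expressions into the Minkowski-combination identity $P_{t_1}=(1-\lambda)P_{t_0}+\lambda P_{t_2}$ and comparing the translation parts yields $v(t_1)=(1-\lambda)v(t_0)+\lambda v(t_2)$ for every such triple, which is exactly the statement that $t\mapsto v(t)$ is affine.

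Finally I would assemble the pyramid. Writing $t=(1-s)\alpha+s\beta$ with $s=(t-\alpha)/(\beta-\alpha)$ and using that $\lambda$ and $v$ are affine with $\lambda(\beta)=0$, one gets $P_t=(1-s)P_\alpha+s\,v(\beta)$, so that the lifted slice $P_t\times\{t\}$ equals $(1-s)\,(P_\alpha\times\{\alpha\})+s\,(v(\beta),\beta)$. Letting the point range over $P_\alpha$ and $s$ over $[0,1]$ shows $P=\conv\big((P_\alpha\times\{\alpha\})\cup\{(v(\beta),\beta)\}\big)$; since $\lambda(\beta)=0$, the top slice $P_\beta=(\beta u+u^\bot)\cap P$ degenerates to the single point $(v(\beta),\beta)$, which is the apex, while $P_\alpha=(\alpha u+u^\bot)\cap P$ is the base, as claimed. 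The main obstacle is the second paragraph: one must invoke the equality characterization of the Brunn--Minkowski inequality for convex bodies and, simultaneously, the equality case of volume monotonicity, and then combine them to upgrade ``pairwise homothetic slices'' to ``affinely varying homothets''. Some care is also needed at the endpoints $\alpha,\beta$, where the slices degenerate; this is handled by passing to the limit using continuity of $t\mapsto P_t$ in the Hausdorff metric.
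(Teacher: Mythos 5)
Your proof is correct, and it runs on the same engine as the paper's: the affinity of $f$ forces equality in the Brunn--Minkowski inequality applied to parallel slices of $P$, and the equality case yields the pyramid. The difference lies in the execution. The paper applies Brunn--Minkowski exactly \emph{once}, to the two extreme slices $S=(\alpha u+u^\bot)\cap P$ and $T=(\beta u+u^\bot)\cap P$: affinity of $f$ collapses the chain
\[
f(\lambda\beta+(1-\lambda)\alpha)\;\geq\; \vol_{n-1}\big(\lambda T+(1-\lambda)S\big)^{\frac{1}{n-1}}\;\geq\; \lambda f(\beta)+(1-\lambda)f(\alpha)
\]
into equalities; the Brunn--Minkowski equality case (with $\vol_{n-1}(S)>0$ excluding the parallel-hyperplanes alternative) makes $S$ and $T$ homothetic, so $T$, having zero volume, is a singleton, and equality in the inclusion forces $P_\lambda=\lambda T+(1-\lambda)S$ for every $\lambda$, i.e.\ the pyramid, in one stroke. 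You instead apply Brunn--Minkowski to all pairs of \emph{interior} slices, which obliges you to establish uniqueness of the homothety representation $P_t=\lambda(t)P_{\mathrm{ref}}+v(t)$ (volume comparison pins down $\lambda(t)$, hence $v(t)$), to deduce affinity of $t\mapsto v(t)$ from the exact Minkowski interpolation, and to recover the degenerate top slice by a Hausdorff-limit argument at $t=\beta$. All of these steps are sound --- endpoint continuity of parallel slices of a compact convex set is a standard fact --- but they cost roughly a page where the paper needs a few lines. What your route buys in exchange: you only ever invoke the Brunn--Minkowski equality characterization for two bodies of positive volume, sidestepping the mildly delicate point, implicit in the paper's argument, that one of the two sets there is degenerate and ``homothetic'' must be read as allowing a point homothet.
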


\begin{proof}
	Let $S = (\alpha u+u^\bot) \cap P$  and $T= (\beta u+u^\bot) \cap P$. Since $f$ is affine, $f(\beta)=0$ and $\vol(P)=\int_\R f(x)^{n-1} dx>0$, we know that $\vol_{n-1}(S)=f(\alpha)>0$. Let $\lambda \in [0,1]$. By the convexity of $P$, we have
	\[\lambda T+ (1-\lambda) S \subseteq \big([\lambda \beta+(1-\lambda)\alpha]u+u^\bot\big) \cap P\eqqcolon P_\lambda.\]
	Combining this with the Brunn-Minkowski inequality \cite[Thm.\ 7.1.1]{sch13}, we obtain
	\begin{equation}\label{eq:pyr_brunn_minkowski}
	\begin{split}f(\lambda \beta+(1-\lambda)\alpha)&\geq\vol_{n-1}(\lambda T+ (1-\lambda) S)^{\frac{1}{n-1}}\\&\geq \lambda\vol_{n-1}(T)^{\frac{1}{n-1}}+(1-\lambda)\vol_{n-1}(S)^{\frac{1}{n-1}}\\&=\lambda f(\beta)+(1-\lambda) f(\alpha).\end{split}
	\end{equation}
	Since $f$ is affine, both inequalities hold with equality. The equality in the Brunn-Minkowski inequality implies that $S$ and $T$ are homothetic (the other equality case being ruled out by the fact that $\vol_{n-1}(S)>0$). Because $\vol_{n-1}(T)=0$, this shows that $T$ is a singleton. Finally, since the polytope $P_\lambda$  contains the polytope $\lambda T+ (1-\lambda) S$, the first equality in \eqref{eq:pyr_brunn_minkowski} implies that $P_\lambda=\lambda T+ (1-\lambda) S$. Since $\lambda \in [0,1]$ was arbitrary, it follows that $P$ is a pyramid with $S$ as its base.
\end{proof}

\section{Proofs of the Theorems}
\label{sec:pfs}

We start with Theorem \ref{thm:affine_scc}.
The basic idea of the proof
is to reduce the problem to the linear case by
replacing $P$ by a certain pyramid $\pyr(P)$ one dimension higher. Rather than performing this replacement step once, we do it recursively, leading to an infinite sequence of pyramids $\pyr(P)$,  $\pyr(\pyr(P))$,  etc. The crucial observation is that the reduction to the linear case becomes stronger in higher dimensions, with the desired estimate as the limiting case.

To this end we define  for $k$-dimensional polytope 
$Q\subset\R^k$ the pyramid $\pyr(Q)$ by 
\[\pyr(Q) =
  \conv((Q\times\{1\})\cup\{-(k+1)e_{k+1}\})\subset\R^{k+1}.\]  
We will need the following properties of this embedding. 
\begin{lemma}
\label{lemma:centered_pyramids}
Let $P\in\Pn_o$ be given as in Theorem \ref{thm:affine_scc}, let $P^{(1)} = \pyr(P)$. Then the following holds: 
\begin{enumerate}
\item \begin{equation*}
         P^{(1)} = \left\{ x\in\R^{n+1} :\left\langle
         \binom{\frac{n+2}{n+1}a_i}{-\frac{1}{n+1}},x
       \right\rangle\leq 1,\,1\leq i\leq m,  x_{n+1}\leq 1 \right\},
\end{equation*} 
\item \begin{equation*}
    \vol_{n+1}(P^{(1)}) = \frac{n+2}{n+1}\vol_n(P),
  \end{equation*}
\item $P^{(1)}$ is centered, i.e.,~$\centroid(P^{(1)})=0$, 
\item Let $C_i^{(1)}$ be the cone given by the facet of $P^{(1)}$
  corresponding to the outer normal vector $\left(\frac{n+2}{n+1}a_i, -\frac{1}{n+1}\right)^T$ and the origin. Then for $1\leq
  i\leq m$   
  \begin{equation*}
    \vol_{n+1}(C^{(1)}_i)=\vol_n(C_i).
\end{equation*} 
\end{enumerate}
\end{lemma}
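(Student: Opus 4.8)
The plan is to establish the four claims in order, reading off the facet structure of the pyramid first and then reusing it. Throughout I write $a=-(n+1)e_{n+1}=(\vnull,-(n+1))\in\R^{n+1}$ for the apex, so that $P^{(1)}=\pyr(P)=\conv\big((P\times\{1\})\cup\{a\}\big)$ is a pyramid with $n$-dimensional base $P\times\{1\}\subset\{x_{n+1}=1\}$ and apex $a$; note $a\notin\aff(P\times\{1\})$ since $-(n+1)\neq 1$.

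For (i) I would invoke the standard description of the facets of a pyramid: they are the base $P\times\{1\}$ together with one lateral facet $\conv\big((F_i\times\{1\})\cup\{a\}\big)$ for each facet $F_i$ of $P$. The base lies in $\{x_{n+1}=1\}$ and yields $x_{n+1}\le 1$. For the $i$-th lateral facet I determine its supporting functional $\ip{(b_i,c_i)}{\cdot}=1$ by requiring it to equal $1$ on the whole flat $\aff(F_i\times\{1\})=\{(x,1):\ip{a_i}{x}=1\}$ and at the apex $a$. The first condition forces $b_i$ to be parallel to $a_i$, say $b_i=\lambda_i a_i$, and gives $\lambda_i+c_i=1$; evaluation at $a$ gives $-(n+1)c_i=1$. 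Solving yields $c_i=-\tfrac1{n+1}$ and $\lambda_i=\tfrac{n+2}{n+1}$, i.e.\ the claimed normal $\big(\tfrac{n+2}{n+1}a_i,-\tfrac1{n+1}\big)$. Since the origin satisfies all $m+1$ inequalities strictly, these are the outer normals and the description is correct.

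Parts (ii) and (iii) are then short. For (ii), $P^{(1)}$ is a pyramid in $\R^{n+1}$ with base of $n$-volume $\vol_n(P\times\{1\})=\vol_n(P)$ and height equal to the distance from $a$ to the base hyperplane $\{x_{n+1}=1\}$, namely $1-(-(n+1))=n+2$; hence $\vol_{n+1}(P^{(1)})=\tfrac1{n+1}(n+2)\vol_n(P)$. For (iii) I apply Lemma~\ref{lemma:pyr_formula} in dimension $n+1$: with base centroid $\centroid(P\times\{1\})=(\centroid(P),1)=(\vnull,1)$ (using that $P$ is centered) and apex $a=(\vnull,-(n+1))$ one obtains $\centroid(P^{(1)})=\tfrac{n+1}{n+2}(\vnull,1)+\tfrac1{n+2}(\vnull,-(n+1))=\vnull$. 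Indeed the apex was placed at height $-(n+1)$ precisely so that these two contributions cancel.

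The computation in (iv) is the real point, and where I would spend the most care. The cone $C_i^{(1)}=\conv(\{\vnull\}\cup G_i)$ sits over the lateral facet $G_i=\conv\big((F_i\times\{1\})\cup\{a\}\big)$ from (i), so $C_i^{(1)}=\conv\big(\{\vnull,a\}\cup(F_i\times\{1\})\big)$. Rather than juggling the Euclidean distance to $\aff(F_i\times\{1\})$ and the norm $\enorm{(b_i,c_i)}$ (whose square-root factors do cancel, but awkwardly), I would argue by dissection and determinants. Fix a triangulation of $F_i$ into $(n-1)$-simplices $T$; it induces a dissection of $C_i=\conv(\{\vnull\}\cup F_i)$ into $n$-simplices $\conv(\{\vnull\}\cup T)$ and, simultaneously, a dissection of $C_i^{(1)}$ into the $(n+1)$-simplices $\conv\big(\{\vnull,a\}\cup(T\times\{1\})\big)$. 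The latter is legitimate because the axis segment $[\vnull,a]$ and the base flat $\aff(F_i\times\{1\})$ are skew (the axis meets $\{x_{n+1}=1\}$ in $(\vnull,1)$, which is not in $\aff(F_i\times\{1\})$ as $\ip{a_i}{\vnull}=0\neq 1$). For a single $T=\conv\{v_1,\dots,v_n\}$ I expand the determinant with columns $a,(v_1,1),\dots,(v_n,1)$ along its first column: only the entry $-(n+1)$ survives and its cofactor is exactly $\det(v_1,\dots,v_n)$, so
\begin{equation*}
\vol_{n+1}\big(\conv(\{\vnull,a\}\cup(T\times\{1\}))\big)=\frac{n+1}{(n+1)!}\lvert\det(v_1,\dots,v_n)\rvert=\frac1{n!}\lvert\det(v_1,\dots,v_n)\rvert=\vol_n\big(\conv(\{\vnull\}\cup T)\big).
\end{equation*}
Summing over the triangulation gives $\vol_{n+1}(C_i^{(1)})=\vol_n(C_i)$. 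The main obstacle is thus purely organizational in (iv): checking that the induced dissection of $C_i^{(1)}$ is valid and arranging the determinant so that the factor $n+1$ coming from the apex height turns $(n+1)!$ into $n!$; everything else is routine.
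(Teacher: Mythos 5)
Your proposal is correct. Parts i)--iii) follow the same route as the paper (read off the pyramid's facet structure, apply the pyramid volume formula, and use Lemma \ref{lemma:pyr_formula} with the apex at height $-(n+1)$ chosen to cancel the base centroid), so the only genuine difference is in iv). There the paper argues by a subtraction trick: with $\overline{C}_i = C_i\times\{1\}$, it writes $C_i^{(1)} = G_1\setminus G_2$, where $G_1 = \conv(\overline{C}_i\cup\{-(n+1)e_{n+1}\})$ and $G_2=\conv(\overline{C}_i\cup\{\vnull\})\subseteq G_1$ are two pyramids over the same $n$-dimensional base $\overline{C}_i$, so that
\[
\vol_{n+1}(C_i^{(1)}) = \tfrac{n+2}{n+1}\vol_n(C_i)-\tfrac{1}{n+1}\vol_n(C_i)=\vol_n(C_i)
\]
by two applications of the same height/base formula used in ii). Your triangulation-plus-determinant computation reaches the same conclusion along a different path: you dissect $C_i^{(1)}$ as the join of the segment $[\vnull,-(n+1)e_{n+1}]$ with $F_i\times\{1\}$, and the cofactor expansion correctly turns the apex height $n+1$ into the factor $(n+1)/(n+1)!=1/n!$ that matches the base cone's volume. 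Both arguments carry a decomposition burden: you must verify the join dissection is valid (which your skewness observation $\ip{a_i}{\vnull}=0\neq 1$ does give, since it makes the representation of interior points of the join unique), while the paper must observe the containment $G_2\subseteq G_1$ and that $G_1$ splits into $C_i^{(1)}$ and $G_2$ along the measure-zero set $\conv(\{\vnull\}\cup(F_i\times\{1\}))$. The paper's route is shorter and coordinate-free, reusing the pyramid formula already needed for ii); yours is more computational but entirely self-contained and makes the numerical cancellation transparent at the level of a single simplex.
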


\begin{proof}
i) and ii) follow directly from the fact that $P^{(1)}$ is indeed a
pyramid;  iii) is a consequence of \eqref{eq:pyramidcenter}.
For iv), let  $\overline{C}_i = C_i\times\{1\}$, $G_1 =
\conv(\overline{C}_i\cup\{-(n+1)e_{n+1}\})$ and
$G_2=\conv(\overline{C}_i\cup\{0\})\subseteq G_1$. Then we have   $C_i^{(1)} = G_1\setminus G_2$ and therefore \[ \begin{split}
\vol_{n+1}(C_i^{(1)}) &= \vol_{n+1}(G_1)-\vol_{n+1}(G_2)\\
&= \frac{n+2}{n+1}\vol_n(C_i) - \frac{1}{n+1}\vol_n(C_i) \\ &= \vol_n(C_i).\qedhere
\end{split}\]
\end{proof}

\begin{proof}[Proof of Theorem \ref{thm:affine_scc}] 
Let
  $A\subseteq\R^n$ be a proper affine space, $d=\dim A$, $I=\{i\in [m]
  :a_i\in A\}$ and  we may assume $\dim\{a_i:i\in I\} =d$.
  % t is no restriction to assume that $A$ contains a $d$-dimensional
  % subset of the points $\{a_i:i\in [m]\}$. Let $I\subseteq[m]$ be
  % the indices of those vectors. 
  For any $k$, let \[\varphi_k:\R^k\to\R^{k+1},~x\mapsto \begin{pmatrix}
	\frac{k+2}{k+1}x\\
	-\frac{1}{k+1} 
	\end{pmatrix}.
	\]
	For $j\geq 1$ and $i\in [m]$ we set \[ a^{(j)}_i = (\varphi_{n+j-1} \circ \cdots\circ\varphi_{n})(a_i)
% 	= \begin{pmatrix}
% 	\frac{n+j+1}{n+1}a_i\\
% 	-\frac{1}{n+1}\\
% 	\vdots \\
% 	-\frac{1}{n+j}
% 	\end{pmatrix}
	\in \R^{n+j},\]
and let  \(L^{(j)} = \lin\{a_i^{(j)}:i\in
I\}\subseteq\R^{n+j}\). Observe that the vectors $a_i^{(j)}$ have the form
\[
a_i^{(j)}
	= \begin{pmatrix}
	\frac{n+j+1}{n+1}a_i\\
	c_{n+1}\\
	\vdots \\
	c_{n+j}
	\end{pmatrix},
\]
where
\[
c_{n+k}=-\frac{n+j+1}{(n+k)(n+k+1)},\quad 1\leq k\leq j.
\]
The $c_{n+k}$'s only depend on $n$ and $j$, but not on $a_i$.
Therefore, $L^{(1)}$ is a $(d+1)$-dimensional linear
space and since the matrix $(a_i^{(j+1)}: i\in I )$ differs
from $(a_i^{(j)}:i\in I)$ only by an additional constant row
and a multiplication of the first $n+j-1$ rows, we have $\dim L^{(j)}=d+1$
for all $j\geq 1$.

Consider the pyramids $P^{(j)}=\pyr(P^{(j-1)})$ with $P^{(0)}=P$. A
repeated application of Lemma \ref{lemma:centered_pyramids} i) and iii) shows that
each $P^{(j)}$ is a centered pyramid that has the vectors 
$\{a_i^{(j)}:i\in [m]\}$ among its normal vectors, and from  Lemma
\ref{lemma:centered_pyramids} ii)  we get
\begin{equation*}
     \vol_{n+j}(P^{(j)}) = \left(\prod_{k=1}^j\frac{n+k+1}{n+k }\right) \vol_n(P) = \frac{n+j+1}{n+1}\voln(P).
  \label{eq:volume}
\end{equation*}
Let $C_i^{(j)}$ be the cone of $P^{(j)}$ corresponding to
$a_i^{(j)}$. Lemma \ref{lemma:centered_pyramids} iv) shows that  
$\vol_{n+j}(C_i^{(j)})=\vol_n(C_i)$,    
%	\[ \vol_{n+j}(P^{(j)}) = \left(\prod_{k=1}^j\frac{n+k+1}{n+k }\right) \vol_n(P) = \frac{n+j+1}{n+1}\voln(P)\]
	and so by Theorem \ref{thm:henklinke} applied to $P^{(j)}$ and $L^{(j)}$ we obtain
	\[\begin{split} \sum_{i\in I}\voln(C_i)& = \sum_{i\in I} \vol_{n+j}(C_i^{(j)})\\
	&\leq \frac{d+1}{n+j} \vol_{n+j}(P^{(j)}) \\ &= \frac{\dim A+1}{n+1}\frac{n+j+1}{n+j} \voln(P).
	\end{split}\] The claim follows from letting $j\to\infty$.  
\end{proof}

Before we come to the proofs of Theorem \ref{thm:pyramids} i) and ii), we observe that equality holds in \eqref{eq:affine_scc}, whenever $\{a_i : 1\leq i \leq m\}\subseteq A\cup A'$, where $A'$ is complementary to $A$; To see this, it suffices to apply \eqref{eq:affine_scc} to both $A$ and $A'$ and obtain
\[
\vol(P) = \sum_{i:a_i\in A}\vol(C_i) + \sum_{i:a_i\in A'} \vol(C_i) \leq \vol(P).
\]
Thus, we have equality in \eqref{eq:affine_scc} for $A$ (and also for $A'$). In view of Proposition \ref{prop:eq_case}, we thus only need to show the ``only if'' parts for the equality cases in Theorem \ref{thm:pyramids}.

We start with case of $A$ being a singleton. Our proof is inspired by the proof of Gr\"unbaum's theorem on central sections of centered convex bodies \cite{gru60}.
\begin{proof}[Proof of Theorem \ref{thm:pyramids} i)]
Without loss of generality,
we assume that $F_i= P \cap \{x \in \R^n : \langle
e_1,x\rangle=-\alpha\}$ for an
appropriately chosen $\alpha>0$. Let $Q = \conv(F_i \cup \{\beta e_1\})$, where $\beta >-\alpha$ is chosen such that $\vol(Q)=\vol(P)$. We define two functions $\R \rightarrow \R$ via
\[f(t) = \vol_{n-1}((te_1+e_1^\bot) \cap P)^{\frac{1}{n-1}}, \quad g(t) = \vol_{n-1}((te_1+e_1^\bot) \cap Q)^{\frac{1}{n-1}}.\] 
If $\langle e_1, c(Q) \rangle \geq \langle e_1, c(P) \rangle$, then by
Lemma \ref{lemma:pyr_formula} it would follow that
\[\vol(C_i)\leq \vol(\conv(F_i \cup \{\centroid(Q)\}))=\frac{1}{n+1} \vol(Q),\]
as desired. Recalling that $P$ is centered, we have to show for $\gamma =\langle e_1, \centroid(Q) \rangle$  that \[\gamma = \langle e_1, \centroid(Q)-\centroid(P) \rangle = \int_{-\infty}^{\infty} t [g(t)^{n-1}-f(t)^{n-1}]\mathrm d t \geq 0,\] with equality if and only if $P$ is a pyramid.

Since $Q$ is a pyramid with base orthogonal to $e_1$, $g$ is affine on
$\supp(g)=[-\alpha,\beta]$. By Brunn's concavity principle \cite[Thm.\ 1.2.1]{ArtsteinAvidanGiannopoulosMilman2015}, $f$ is concave on $\supp(f)$. Hence, $g-f$ is convex on $\supp(f) \cap \supp(g)$. In fact, we have $\supp(f) \subseteq \supp(g)$: If there was a $t>\beta$ with $f(t)>0$, then the concavity of $f$ would imply $f>g$ on $\supp(g)$, in contradiction to $\vol(Q)=\vol(P)$. Hence, $g-f$ is convex on $\supp(f)$ and the sublevel set \[\supp(f)\cap\{g-f \leq 0\}=\supp(f)\cap\{g^{n-1} -f^{n-1}\leq 0 \}\] is convex. Since $f(-\alpha) =g(-\alpha)$, it follows that $\supp(f)\cap\{g-f \leq 0\}=[-\alpha,\tau]$ for a $\tau\leq \beta$. On $[\tau,\beta]$ we have $g \geq f$, leading to the desired estimate
\[\begin{split} \gamma &= \int_{-\alpha}^\tau t [g(t)^{n-1}-f(t)^{n-1}]\mathrm d t+\int_{\tau}^\beta t [g(t)^{n-1}-f(t)^{n-1}]\mathrm d t\\
&\geq \int_{-\alpha}^\tau \tau [g(t)^{n-1}-f(t)^{n-1}]\mathrm d t+\int_{\tau}^\beta \tau [g(t)^{n-1}-f(t)^{n-1}]\mathrm d t\\
&=\tau\left(\int_{-\alpha}^\beta [g(t)^{n-1}-f(t)^{n-1}]\mathrm d t\right)=\tau\left(\vol(Q)-\vol(P)\right)=0.
\end{split}\]
Equality holds if and only if $g=f$ on $[-\alpha,\beta]$. It is clear
that this is the case if $P$ is a pyramid with base $F_i$; the other direction follows from Lemma \ref{lemma:pyr_affine}.
\end{proof} 

Next, we give two proofs of Theorem \ref{thm:pyramids} ii), corresponding to two different perspectives on the problem. The first proof has a more geometric flavor, whereas the second proof is of a probabilistic nature. 
\begin{proof}[Geometric proof of Theorem \ref{thm:pyramids} ii)]
%To verify that the statement holds with equality if $P$ is a pyramid with apex $v$ is a matter of a simple computation (involving Lemma \ref{lemma:pyr_formula}). We therefore show only the other implication.
 Let $I\subseteq [m]$ be the set of indices such that $\langle v,a_i\rangle =1$, i.e., $A=\aff\{a_i:i\in I\}$. Since $P$ is centered, we have $-\frac 1 n v \in P$ (see \cite[Sect.~34]{bf34}). For $i\in I$, we consider the cones $\overline{C}_i=\conv(F_i\cup\{-(1/n) v\})\subseteq P$, where $F_i$ is the facet of $P$ with normal $a_i$. By the volume formula for pyramids, we have $\vol(\overline{C}_i) = \frac{n+1}{n}\vol(C_i)$. As the $\overline{C}_i$'s intersect in a set of measure zero, we obtain
\begin{equation}
\label{eq:overlinec}
\vol(P)\geq \sum_{i\in I}\vol(\overline{C}_i) = \frac{n+1}{n}\sum_{i\in I}\vol(C_i).
\end{equation}
So we have reproven Theorem \ref{thm:affine_scc} in this case. In order to have equality in the above, we must have $P=\bigcup_{i\in I}\overline{C}_i$. Let $J=[m]\setminus I$. Then we have 
\begin{equation}
\label{eq:subdivision}
\langle -(1/n)v,a_j\rangle=1,~\forall j\in J,
\end{equation}
since otherwise, the cone $C_j$ would have a positive volume and we could not achieve equality in \eqref{eq:overlinec}.

For $j\in J$, let $Q_j=\conv(F_j\cup\{v\})\subseteq P$. Just like the
$\overline{C}_i$'s, the $Q_j$'s subdivide $P$, i.e., $P=\bigcup_{j\in
  J}Q_j$ and the pyramids intersect in sets of measure zero. By
\eqref{eq:pyramidcenter}, we have $\centroid(Q_j) =
\frac{n}{n+1}\centroid(F_j) + \frac 1 n v$ and in view of
\eqref{eq:additivity} we may write 
\[ 0 = \centroid(P) = \sum_{j\in J} \frac{\vol(Q_j)}{\vol(P)} \left(  \frac{n}{n+1}\centroid(F_j) + \frac{1}{n+1} v \right).\]
Multiplying with $(n+1)/n$ and rearranging yields 
\[-\frac 1 n v = \sum_{j\in J} \frac{\vol(Q_j)}{\vol(P)}\left(-\frac 1 n v\right) = \sum_{j\in J} \frac{\vol(Q_j)}{\vol(P)} \centroid(F_j).\]
Hence, \eqref{eq:subdivision} gives for any $j\in J$ 
\[1 = \langle -\frac 1 n v, a_j\rangle = \sum_{k\in J} \frac
  {\vol(Q_k)}{\vol(P)} \langle \centroid(F_k), a_j\rangle.\] 
Towards a contradiction, assume that $J$ contains more than one element. Then there is a $k\in J\setminus \{j\}$. Since $\centroid(F_k)\in\mathrm{relint} F_k$ we have $\langle \centroid(F_k), a_j\rangle < 1$. It follows that $1 < \sum_{k\in J}\vol(Q_k)/\vol(P) = 1 $. Therefore, $J$ can contain only one element, which corresponds to the case that $P$ is a pyramid with apex $v$.
\end{proof}

We now come to the second proof of Theorem \ref{thm:pyramids} ii) via a probabilistic approach.

\begin{proof}[Analytic proof of Theorem \ref{thm:pyramids} ii)]
Again, we only show the ``only if'' part of the equality case. To this end, we assume that $\vol(P)=1$, which is not a restriction as both sides of \eqref{eq:affine_scc} are $n$-homogeneous. By definition, we have $\centroid(P)=\E[X]$, 
where $X$ is a uniformly distributed random vector in $P$. We consider the functional 
\[ f:\R^n\to\R,~f(x) =\frac 1 n \sum_{i:a_i\in A} \mathrm{dist}(x,\aff F_i)\,\vol_{n-1}(F_i),\]
 where $\mathrm{dist}(x,\aff F_i)$ is the signed Euclidean distance to $\aff F_i$, oriented such that it is non-negative inside $P$. Note that for $x\in P$ one has \[ f(x) =  \sum_{i:a_i\in A} \vol\big(\conv(F_i\cup\{x\})\big).\] As $f$ is an affine map, we have
\begin{equation}
\label{eq:expectation}
\sum_{i:a_i\in A} \vol(C_i) = \E[f(X)] = \int_0^1 \Prob_X(f\geq t)\mathrm d t = 1-\int_0^1\Prob_X(f<t)\mathrm d t.
\end{equation}
We consider the function $p:[0,1]\to [0,1],~t\mapsto\Prob_X(f<t)^{\frac 1 n}$.
We have $p(0)=0$ and $p(t )=1$, for $t\geq m=\max f(P)\leq 1$. Let $H(t) = \{x\in\R^n:f(x)\leq t\}$ be the half-space where $f\leq t$.
Since the vertex $v$ is the unique point that is contained in all facets $F_i$, where $a_i\in A$, we have $0\in f(P)$ and $f(x)=0$ for $x\in P$, if and only if $x=v$. Thus, $P\cap H(0)=\{v\}$. Using the inclusion
\begin{equation}
\label{eq:inclusion}
P\cap H(t)\supseteq\frac t m \big(P\cap H(m)\big) + \frac{m-t}{m}v,
\end{equation} 
we deduce that, for any $t\in [0,m]$,
\begin{equation}
\label{eq:fradelizi} 
\begin{split}
p(t)& = \vol(P\cap H(t))^{\frac 1 n} \geq \vol\Big(\frac t m \big(P\cap H(m)\big) +\frac{m-t}{m}v\Big)^\frac{1}{n}\\
& = \frac t m \vol \big(P\cap H(m)\big)^{\frac 1 n} = \frac t mp(m)=\frac t m.
\end{split}
\end{equation}

Applying this to \eqref{eq:expectation}, we have 
\[\begin{split}
\sum_{i:a_i\in A} \vol(C_i) &= 1-\int_0^m p(t)^n\mathrm d t -(1-m)\\
&\leq m-\int_0^m \left(\frac t m\right)^n\mathrm d t= \frac{mn}{n+1}\leq \frac{n}{n+1}.
\end{split}\]
By our assumption that $\vol(P)=1$, this is \eqref{eq:affine_scc}. In order to have equality, we must have $m=1$ and equality in \eqref{eq:fradelizi}, i.e., $\vol(P\cap H(t))=t^n$ for $t\in [0,1]$. This is equivalent to $\vol_{n-1}(P\cap \{x\in\R^n:f(x)= t\})=nt^{n-1}$ for $t\in [0,1]$. By Lemma \ref{lemma:pyr_affine}, this implies that $P$ is a pyramid with apex $v$.
\end{proof}
%\[P\cap H(t) = t(P\cap H(1))+(1-t)v.\]
%Thus, we see that any point $x\in P$ with $f(x)=t$ may be written as
%\begin{equation}
%\label{eq:combination}
%x=ty+(1-t)v\text{ for some }y\in P.
%\end{equation}
%Since $f$ is affine and $f(v)=0$, we must have $f(y)=1$. But as $f(x)\leq 1$ for all $x\in P$, $F= \{x\in P: f(x)=1\}$ is a face of $P$ and \eqref{eq:combination} yields that $P=\conv(F\cup\{v\})$ is a pyramid.

\begin{remark}
It is natural to ask whether the assumption in Theorem \ref{thm:pyramids} ii) that $v$ is a vertex of $P$ can be removed. In other words, is it possible to adapt our proofs to the situation where the hyperplanes $\{x \in \R^n : \langle a_i,x\rangle=1\}$, $a_i \in A$, intersect in a single point $v$ that is not necessarily contained in $P$ (cf.\ Figure \ref{fig:affine})? Both proofs of Theorem \ref{thm:pyramids} ii) make use of the assumption that $v \in P$: In the first proof, we use it to derive $-\frac{1}{n}v \in P$; in the second proof, it ensures that $p$ is concave on $[0,\max f(P)]$. It is not clear how the first proof could be modified to dispense with the assumption. In the second proof, a suitable upper bound on $\max f(P)$ in terms of $\min f(P)$ would be sufficient: The concavity of $p$ on $[\min f(P),\max f(P)]$ leads to the desired estimate if we additionally assume that $\max f(P) \leq 1- \frac{\min f(P)}{n}$.
\end{remark}

\section{Complementary Affine Subspaces}\label{sec:case_of_eq}

To conclude, let us have a closer look at the characterization of the
equality case as it has been formulated by K.-Y. Wu: A smooth and reflexive polytope $P = \{x\in\R^n : \langle x, a_i\rangle\leq 1,~1\leq i \leq m\}$ satisfies the affine subspace concentration condition \eqref{eq:affine_scc} for an affine $d$-subspace $A$ with equality if and only if the normal vectors $\{a_i:1\leq i\leq m\}$ of $P$ are contained in $A\cup A'$, where $A'$ is an affine $(n-d-1)$-subspace complementary to $A$.

At first glance, this condition may appear rather technical, but in
fact, it has a strong geometric interpretation for the polytope $P$:
Since the $a_i$'s are the vertices of $P^\star$, the condition that
$\{a_1,\dots,a_m\}$ is contained in $A\cup A'$ is equivalent to
$P^\star = \conv( P_1\cup P_2)$, where $P_1 $ is a $d$-polytope and
$P_2$ is an $(n-d-1)$-polytope and $\aff P_1=A$ and $\aff P_2=A'$
are complementary affine spaces. In general, 
a polytope that can be expressed as the convex hull of two polytopes
$Q_1$ and $Q_2$ in complementary affine subspaces is also called the
\emph{join} of $Q_1$ and $Q_2$ \cite[p.~390]{hrgz17}.
Therefore, the statement of Proposition 1.4 can be reformulated as 
that an $n$-dimensional polytope $P$ is the join of a $d$-polytope
$Q_1$and a $(n-d-1)$-polytope $Q_2$ if and only $P^*$ is. Since we
could not find a reference for this certainly well-known fact we add a
proof.

\begin{proof}[Proof of Proposition \ref{prop:geom_affine}]
%Since the $a_i$'s are the vertices of $P^\star$, the condition that
%$\{a_1,\dots,a_m\}$ is contained in $A\cup A'$ is equivalent to
%$P^\star = \conv( P_1\cup P_2)$, where $P_1 $ is a $d$-polytope and
%$P_2$ is an $(n-d-1)$-polytope and $\aff P_1=A$ and $\aff P_2=A'$
%are complementary affine spaces.
 By polarity, it is %therefore 
 enough
to prove that $P$ being the join of $Q_1$ and $Q_2$ implies that $P^\star$ is the join of two polytopes $P_1$ and $P_2$ of appropriate dimension.

So let $P=\conv(Q_1\cup Q_2)$ with $Q_1$ and $Q_2$ as in the statement of the proposition. First, we show that $Q_1$ and $Q_2$ are faces of $P$. Let $x_1 \in Q_1$, $x_2 \in Q_2$ and $L=\lin((Q_1-x_1) \cup (Q_2-x_2))$. Since $\dim Q_1 +\dim Q_2 =n-1$, we have $\dim L \leq n-1$. Choosing a vector $u \in L^\bot \setminus \{0\}$, the linear functional $f \colon \R^n \rightarrow \R$, $x\mapsto \langle u,x\rangle$ satisfies $f(Q_1)=\{\alpha\}$ and $f(Q_2)=\{\beta\}$ for certain $\alpha,\beta \in \R$. %We choose $u$ such that $\alpha \leq \beta$. 
Since $P$ is $n$-dimensional and of the form $P= \conv(Q_1\cup Q_2)$, we have $\alpha \neq \beta$, $f(P)=\conv\{\alpha,\beta\}$ and
\[f^{-1}(\{\alpha\}) \cap P = Q_1, \quad f^{-1}(\{\beta\}) \cap P = Q_2. \]
This shows that $Q_1$ and $Q_2$ are faces of $P$.

%First, we prove that $Q_1$ is a face of $P$. To see this, let $x\in Q_1$ and $u,v$ in $P$ be such that \begin{equation}
%\label{eq:affine_comb1}
%x = \lambda u + (1-\lambda) v,
%\end{equation}
% for some $\lambda \in (0,1)$. We may write $u = \mu u_1 + (1-\mu) u_2$ and $v = \nu v_1 + (1-\nu)v_2$, where $u_1,v_1\in Q_1$, $u_2,v_2\in Q_2$ and $\mu,\nu\in [0,1]$. Plugging this into \eqref{eq:affine_comb1} and rearranging gives
% \begin{equation}
%\label{eq:affine_comb2}
%x - \lambda\mu u_1 -(1-\lambda)\nu v_1 = \lambda(1-\mu)u_2 + (1-\lambda)(1-\nu)v_2.
%\end{equation}
%On both sides of the equality, the coefficients add up to the same value $\alpha=1-\lambda\mu+\lambda\nu-\nu$. We must have $\alpha=0$. Otherwise, we could divide \eqref{eq:affine_comb2} by $\alpha$ and find a point in the intersection of $\aff(Q_1)$ and $\aff(Q_2)$, a contradiction to the fact that these spaces are complementary. So we have
%\[
%0=\alpha=\lambda(1-\mu) + (1-\lambda)(1-\nu).
%\]
%Since $\lambda$ is neither 0, nor 1, and $\mu,\nu\in [0,1]$, the only solution to this equation is $\mu=\nu=1$, which is equivalent to $u,v\in Q_1$. So we saw that the only way to express a point $x\in Q_1$ as a proper convex combination of two points $u,v\in P$ is by choosing $u$ and $v$ from $Q_1$, which proves that $Q_1$ is indeed a face of $P$. The same reasoning shows that $Q_2$ is a face of $P$ as well.

The notion of a \textit{polar face} was introduced in Section \ref{subsec:polytopes}. We consider the polar faces $P_i=Q_i^\diamond\subseteq
P^\star$, $i\in\{1,2\}$, of the two faces
$Q_1,Q_2 \subset P$. Note that $\dim P_1 = n-d-1$ and $\dim P_2 = d$. Clearly, we
have $\conv(P_1\cup P_2)\subseteq P^\star$. If the inclusion was
strict, we find a vertex $v$ of $P^\star$ which is neither a vertex of
$P_1$, nor of $P_2$. Consider the  corresponding facet $F=v^\diamond$
of $P$. Since $v$ is not contained in $P_1\cup P_2$, it follows by
polarity that neither $Q_1$, nor $Q_2$ is contained in $F$. But $F_i=Q_i\cap F$ is a face of $Q_i$. Thus, we have $\dim F_1 \leq d-1$ and $\dim F_2 \leq n-d-2$. Due to the assumption $P=\conv(Q_1\cup Q_2)$, the vertices of $F$ are contained in $Q_1\cup Q_2$, i.e., $F = \conv(F_1\cup F_2)$. It follows that \[\dim F \leq 1 +\dim F_1 +\dim F_2 = 1+(d-1)+(n-d-2)= n-2,\]  a contradiction. So we have proven $P^\star = \conv(P_1\cup P_2)$. Since $\dim(P^\star) = n$ and $P^\star\subseteq \aff(P_1\cup P_2)$, we have $\aff(P_1\cup P_2)=\R^n$, so the affine hulls of $P_1$ and $P_2$ are indeed complementary.
\end{proof}

We recall that an $n$-polytope $P$ is called \emph{simple} if every
vertex $v$ of $P$ is contained in exactly $n$ edges, or, equivalently,
in exactly $n$ facets of $P$. For a simple polytope $P=\{x\in\R^n :
\langle x,a_i\rangle\leq  1,~1\leq i\leq m\}$ the property
$\{a_1,\dots,a_m\}\subseteq A\cup A'$, for some pair of complementary
proper affine subspaces of $\R^n$, is equivalent to the fact that $P$
is a simplex. Indeed, we obtain from this that $P^\star =
\conv(P_1\cup P_2)$, where the affine hull of $P_1$ is $A$ and the
affine hull of $P_2$ is $A'$. Since $P$ is simple, $P^\star$ is
simplicial, i.e., all faces of $P^\star$ are simplices. As we saw in
the proof of Proposition \ref{prop:geom_affine}, the polytopes $P_1$
and $P_2$ are faces of $P^\star$, so they are simplices of dimension $\dim A$ and $n-1-\dim A$, respectively. Hence, $P^\star$ is a simplex, which implies that $P$ is a simplex as well.

As smooth polytopes are simple by definition, we see that simplices are the only equality cases in Theorem \ref{thm:wu}.

We conclude by providing a proof of Corollary \ref{cor:simple}.

\begin{proof}[Proof of Corollary \ref{cor:simple}]
Let $F$ be a $k$-face of $P$. Since $P$ is simple, there are exactly
$n-k-1$ vectors among the $a_i$'s that satisfy $F\subseteq
F_i$. Without loss of generality, we assume that $a_1,...,a_{n-k-1}$ are these vectors. In view of Theorem \ref{thm:pyramids} i), we obtain
\begin{equation}
\label{eq:1dimineq}
\vol(C_i)\leq\frac{1}{n+1}\,\vol(P),\text{ for all }1\leq i \leq n-k-1.
\end{equation} 
Summing up these inequalities gives \eqref{eq:affine_scc} for $P$ and $A$ where equality holds, if and only if equality holds in each of the inequalities in \eqref{eq:1dimineq}. In particular, equality holds, only if $P$ is a pyramid with base $F_1$. Since $P$ is simple, this implies that $P$ is a simplex.
\end{proof} 

%note that for a simple polytope $P$ and an affine $d$-subspace $A$ such that all facets $F_a$, $a\in A\cap\{a_1,\dots,a_m\}$, meet at a $(n-d-1)$-face $G$ of $P$, the affine subspace concentration inequality \eqref{eq:affine_scc} follows readily from the 0-dimensional case, i.e., Theorem \ref{thm:pyramids} i). The polar face $G^\diamond$ then contains $A\cap\{a_1,\dots,a_m\}$ among its vertices. But since $G^\diamond$ is a $d$-face of $P^\star$ and as such a $d$-simplex, we have $|A\cap\{a_1,\dots,a_m\}|\leq d+1$. Thus, \eqref{eq:affine_scc} follows for $A$ by adding the inequalities 
%\[
%\vol(C_a)\leq\frac{1}{n+1}\,\vol(P)
%\] 
%obtained by Theorem \ref{thm:pyramids} i) for all $a\in A\cap\{a_1,\dots,a_m\}$.\ansgar{sollte man das dann vielleicht als Beweis von Korollar 1.3 verkaufen?} For a general $d$-subspace, however, this reasoning does not apply.
%\vfill

%%% Bibliography %%%

%\bibliographystyle{abbrv}
% \bibliography{references}

%\begin{thebibliography}{99}

%\bibitem{woods}
%Alan C. Woods.
%\newblock On a theorem of Minkowski
%\newblock {\em Proc. Amer. Math. Soc.}, 9:354--355, 1958.

%\end{thebibliography}

\end{document}